\newtheorem{theorem}{Theorem}[section]
\newtheorem{lemma}[theorem]{Lemma}
\theoremstyle{definition}
\newtheorem{definition}[theorem]{Definition}
\newtheorem{remark}[theorem]{Remark}
\newtheorem{example}{Example}[section]
\mathchardef\ordinarycolon\mathcode`\:
\def\R{\mathbb{R}}
\renewcommand{\R}{\mathbb{R}}
\newcommand{\rr}{\mathbb{R}}
\newcommand{\Ac}{\mathcal{A}}
\newcommand{\Dc}{\mathcal{D}}
\newcommand{\Ec}{\mathcal{E}}
\newcommand{\Fc}{\mathcal{F}}
\newcommand{\Lc}{\mathcal{L}}
\newcommand{\Mc}{\mathcal{M}}
\newcommand{\Qc}{\mathcal{Q}}
\newcommand{\Sc}{\mathcal{S}}
\newcommand{\WF}{\mathrm{WF}}                         
\newcommand{\norm}[1]{\left\lVert#1\right\rVert}      
\newcommand{\abs}[1]{\left|#1\right|}                 
\newcommand{\paren}[1]{\left(#1\right)}               
\newcommand{\bparen}[1]{\left[#1\right]}               
\newcommand{\sparen}[1]{\left\{#1\right\}}		      
\renewcommand{\d}{\,\mathrm{d}}						  
\DeclareMathOperator*{\supp}{\mathrm{supp}}           
\DeclareMathOperator*{\singsupp}{\mathrm{sing\:supp}} 
\renewcommand{\epsilon}{\varepsilon}
\renewcommand{\rho}{\varrho}
\newcommand{\vp}{\varphi}
\newcommand{\thperp}{{\theta^\bot}}
\renewcommand{\tilde}{\widetilde}
\newcommand{\rtwo}{{{\mathbb R}^2}}
\newcommand{\rthree}{{{\mathbb R}^3}}
\newcommand{\rn}{{{\mathbb R}^n}}
\newcommand{\eps}{\varepsilon}
\newcommand{\st}{\hskip 0.3mm : \hskip 0.3mm}
\renewcommand{\th}{\theta}
\newcommand{\be}{\begin{equation}}
\newcommand{\ee}{\end{equation}}
\newcommand{\bea}{\begin{eqnarray}}
\newcommand{\eea}{\end{eqnarray}}
\newcommand{\bean}{\begin{eqnarray*}}
\newcommand{\eean}{\end{eqnarray*}}
\newcommand{\bel}[1]{\begin{equation}\label{#1}}
\newcommand{\eel}[1]{{\label{#1}\end{equation}}}
\newcommand{\inv}{^{-1}}
\newcommand{\intt}{{\operatorname{int}}}
\newcommand{\range}{{\operatorname{range}}}
\newcommand{\bd}{{\operatorname{bd}}}
\newcommand{\smo}{\setminus\boldsymbol{0}}
\newcommand{\xio}{{\xi_0}}
\newcommand{\xo}{{x_0}}
\newcommand{\PD}{\boldsymbol{d}}
\newcommand{\A}{\alpha}
\newcommand{\B}{\beta}
\newcommand{\g}{\gamma}
\newcommand{\chiab}{\chi_{\mathrm{[a,b]}}}
\newcommand{\chia}{\chi_{\mathrm{A}}}
\newcommand{\dx}{\mathbf{dx}}
\newcommand{\dy}{\mathbf{dy}}
\newcommand{\dr}{\mathbf{dr}}
\newcommand{\ds}{\mathbf{ds}}
\newcommand{\dphi}{\mathbf{d\phi}}
\newcommand{\Om}{\Omega}
\newcommand{\Mab}{\mathcal{M}_{[a,b]}}
\newcommand{\Cmt}{C^t_\mathcal{M}}
\newcommand{\Cm}{C_\mathcal{M}}
\newcommand{\Vc}{\mathcal{V}}
\newcommand{\Vab}{\mathcal{V}_{[a,b]}}
\newcommand{\xoxio}{(x_0,\xi_0\dx)}
\newcommand{\sxxi}{\left\{(x,\xi\dx)\right\}}
\newcommand{\Aab}{\mathcal{A}_{\{a,b\}}}
\newcommand{\Wab}{W_{\{a,b\}}}
\newcommand{\Kvp}{\mathcal{K}_\varphi}
\newcommand{\n}{\mathbf{n}}
\newcommand{\npx}{\mathbf{n}(\phi,x)}
\newcommand{\Lvp}{\mathcal{L}_{\varphi}}
\newcommand{\Ma}{\mathcal{M}_\mathrm{A}}
\newcommand{\Mst}{\mathcal{M}^\ast}
\newcommand{\nyx}{\mathbf{n}(y,x)}
\newcommand{\Vs}[1]{\mathcal{V}_{\mathcal{S},#1}}
\newcommand{\As}[1]{\mathcal{A}_{\mathcal{S},#1}}
\newcommand{\Vsk}{\mathcal{V}_{\mathcal{S},K}}
\newcommand{\Ask}{\mathcal{A}_{\mathcal{S},K}}
\newcommand{\Ms}{\mathcal{M}_{\mathcal{S}}}
\newcommand{\Msst}{\mathcal{M}_{\mathcal{S}}^*}
\newcommand{\Msa}{\mathcal{M}_{\mathcal{S},A}}
\newcommand{\Cs}{C_\mathcal{S}}
\newcommand{\Cst}{C^t_\mathcal{S}}
\newcommand{\Xis}{\Xi_\mathcal{S}}
\newcommand{\tx}{\tilde{x}}
\newcommand{\om}{\omega}
\newcommand{\Wbd}{W_{A}}
\newcommand{\alphao}{\alpha_0}
\newcommand{\etao}{\eta_0}
\newcommand{\po}{\phi_1}
\newcommand{\pt}{\phi_2}
\newcommand{\cinv}{c_\text{inv}}
\newcommand{\vn}{\overline{n}}
\title{Artifacts in incomplete data tomography\\ with applications to photoacoustic tomography and sonar}
\author{J\"urgen Frikel\footnotemark[1] \and  Eric Todd 
Quinto\footnotemark[2]
}
\begin{document}
\date{}
\maketitle

\renewcommand{\thefootnote}{\fnsymbol{footnote}}
\footnotetext[1]{Department of Mathematics, Tufts University,
Medford, MA 02155, USA and Institute of Computational Biology,
Helmholtz Zentrum M\"unchen, Germany, \textbf{Email:}~\texttt{juergen.frikel@helmholtz-muenchen.de}}
\footnotetext[2]{Department of Mathematics, Tufts University,
Medford, MA 02155, USA; \textbf{Email:}~\texttt{todd.quinto@tufts.edu}}

\begin{abstract}
We develop a paradigm using microlocal analysis that allows one to
characterize the visible and added singularities in a broad range of
incomplete data tomography problems. We give precise characterizations
for photo- and thermoacoustic tomography and Sonar, and provide
artifact reduction strategies. In particular, our theorems show that
it is better to arrange Sonar detectors so that the boundary of the
set of detectors does not have corners and is smooth. To illustrate
our results, we provide reconstructions from synthetic spherical mean
data as well as from experimental photoacoustic data.

\end{abstract}

%

\pagestyle{myheadings}
\thispagestyle{plain}



\section{Introduction}

In many types of computed tomography, such as x-ray tomography,
photoacoustic (and thermoacoustic) tomography (PAT/TAT) or Sonar, the
tomographic projections can be acquired only from a limited field of
view. As a result, the data are highly incomplete and the
corresponding reconstruction problem becomes severely ill-posed which
leads to serious instabilities of the reconstruction process. As a
consequence two phenomena can be observed in practical
reconstructions: First, only specific features of the unknown object
(visible singularities) can be reconstructed reliably, cf.
\cite{Palamodov:JFAA,Quinto93} and Figure \ref{fig:pat circ rec}.
Second, and even more important, additional singularities (artifacts)
can be generated during the reconstruction and superimpose reliable
information, cf.  \cite{FrikelQuinto2013,Katsevich:1997} and Figure
\ref{fig:pat circ rec}. This is a serious problem, since artifacts can
overlap and generate new image features leading to misinterpretations
or possibly misdiagnosis in medical imaging applications. It is
therefore essential to develop a precise understanding of such
artifacts for a range of imaging situations and to provide algorithms
that reliably reconstruct the information that is contained in the
data and at the same time avoid the generation of unwanted features.

The generation of artifacts in incomplete data PAT and Sonar has been
addressed in several publications, eg.
\cite{Buehler:2011bv,Haltmeier:2009ee,Jaeger:2007fy,Natterer86,Nguyen:2013et,Patrickeyev:2004oa,Xu:2004fe},
to mention only a few. In particular, it has been observed that those
artifacts occur due to hard truncation of the data.  In order to
reduce the generation of artifacts in practical 
reconstructions, some authors therefore use smooth truncation of the
limited view data. Although it is intuitively understood why artifacts
occur and how to deal with them, to the best of our knowledge, no
theoretical (geometrical) characterization of artifacts was given so
far - neither in PAT nor in Sonar.  Also, it has not yet been
mathematically justified, in general, why smooth truncation
reduces artifacts (see Remark \ref{remark:subtle}).

In this article, we use the framework of microlocal analysis and the
calculus of Fourier integral operators to develop a general approach
that enables one to mathematically characterize limited angle
artifacts for different types of tomography problems. We show that the
reason for artifact generation is the hard truncation of the data at
the ends of the angular range, and that they can be reduced by using a
smooth truncation.  We provide a paradigm that applies to a broad
range of limited data problems and derive explicit characterizations of 
artifacts PAT/TAT and Sonar. Moreover, we illustrate our results in
numerical experiments on simulated and experimental PAT data. 

Characterizations of limited angle artifacts for x-ray tomography were
obtained in \cite{Katsevich:1997} and \cite{FrikelQuinto2013}, where
also a smooth truncation of the limited data was proposed to reduce
the artifacts.  Katsevich's results \cite{Katsevich:1997} apply to the
line transform with arbitrary smooth weights.  The results in
\cite{FrikelQuinto2013,Katsevich:1997} have been derived in a way
which does not directly generalize to other tomography problems.  This
is mainly due to the fact that the authors heavily rely on the
explicit expression of the reconstruction operators as singular
pseudodifferential operators.  However, for many other tomography
problems such formulas are not available and, hence, the techniques of
\cite{FrikelQuinto2013,Katsevich:1997} cannot be applied in order to
get similar characterizations.  Nguyen has qualitatively
analyzed the strength of the added artifacts occurring in limited
angle tomography \cite{Nguyen:2014}.  His recent article
\cite{Nguyen:2014b} uses ideas from this article plus microlocal,
asymptotic arguments to calculate the strength of the artifacts for
the spherical transform in cases related to the ones we discuss here. In
\cite{ParkChoiSeo:2014} microlocal analysis is used to understand the
streaks in X-ray CT scans caused by metal.  In
\cite{SymesSeismics1998}, Symes observed the advantages of soft
truncation (p.\ 46-47) and the problems with hard truncation (p.\ 65)
in seismic imaging.

\begin{figure}[t]
\centering
	\includegraphics[width=4cm]{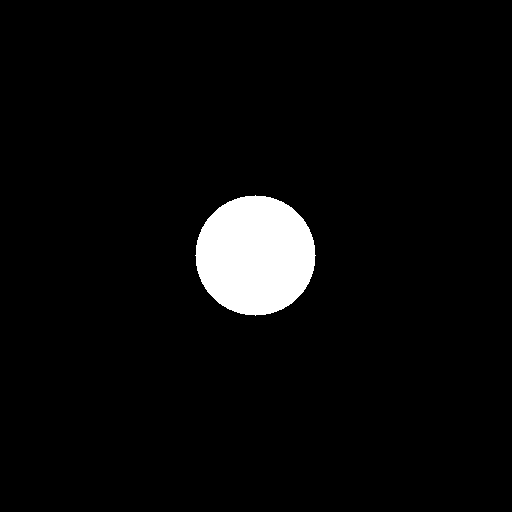}\hskip2ex
	\includegraphics[width=4cm]{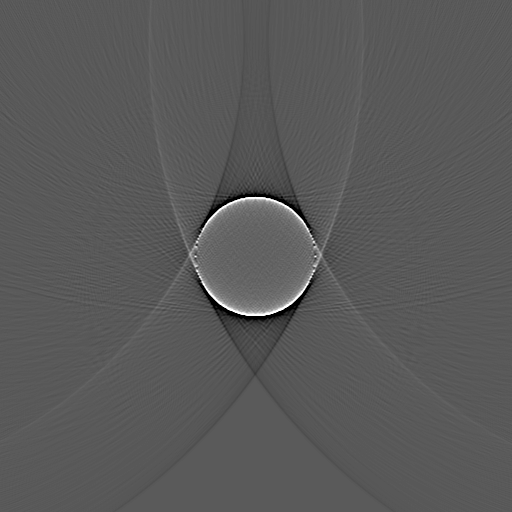}\hskip2ex
	\includegraphics[width=4cm]{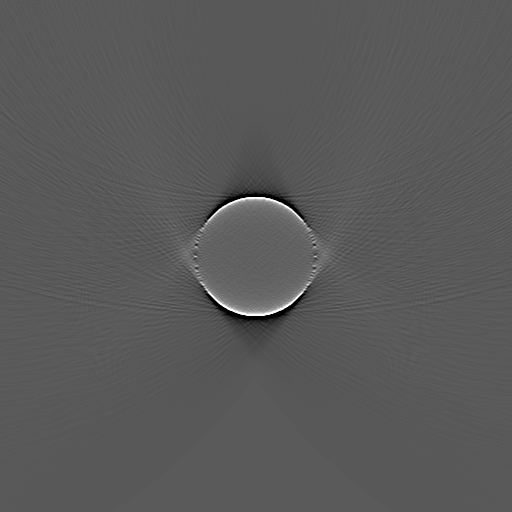}
	\caption{Lambda type reconstruction from limited view spherical mean data for the limited angular range $[25^\circ,155^\circ]$ ($361$ projections, $725$ radii). 
	\emph{Left:} original image ($512\times 512$, characteristic function of a disc centered at the origin); \emph{Middle:} Lambda reconstruction without artifact reduction; \emph{Right:} Lambda reconstruction with artifact reduction.   
}
\label{fig:pat circ rec}
\end{figure}

This article is organized as follows. In Section \ref{sect:microlocal analysis} we introduce the basic
microlocal analysis needed for the article, including fundamental
theorems about Fourier integral operators. Then, in Section
\ref{sect:general strategy}, we give the key Theorem \ref{thm:WF
mult}, describe the general strategy, and outline our paradigm to
characterize the added singularities.  This will guide the proofs and
show how the paradigm can be used in general.  In Section
\ref{sect:characterization}, we apply these results to thermo- and
photoacoustic tomography and Sonar. In section \ref{sect:numerics}
we provide reconstructions from real data to show how the artifacts
occur and how they can be decreased.  Finally, in section
\ref{sect:conclusions}, we make some general observations about our
method. Proofs of our main theorems are provided in the Appendix.

\section{Microlocal Analysis and Fourier Integral Operators}
\label{sect:microlocal analysis}

In this section we review basic facts from microlocal analysis and the
calculus of Fourier integral operators (including fundamental
theorems) needed for the article.  For general facts about the theory
of distributions and more details on microlocal analysis we refer to
\cite{Friedlander98,Hoermander03}. For details on Fourier integral
operators we refer to \cite{Ho1971,Treves:1980vf}.
 
Let $\Om$ be an open set.  We denote the set of $C^\infty$ functions
with domain $\Om$, by $\Ec(\Om)$ and the set of $C^\infty$ functions
of compact support in $\Om$ by $\Dc(\Om)$.  Distributions are
continuous linear functionals on these function spaces.  The dual
space to $\Dc(\Om)$ is denoted $\Dc'(\Om)$ and the dual space to
$\Ec(\Om)$ is denoted $\Ec'(\Om)$. In fact, $\Ec'(\Om)$ is the set of
distributions of compact support in $\Om$.  For more information about
these spaces we refer to \cite{Rudin:FA}.

A function $f(\xi)$ is said to \emph{decay rapidly at infinity in a
conic open set $V$} if it decays faster than any power of
$1/\norm{\xi}$ in $V$.  The singular support of a distribution, $f$,
$\singsupp(f)$, is the complement of the largest open set on which $f$
is a $C^\infty$ function.  It follows directly from this definition
that $\singsupp(f)\subset\supp(f)$, and $\singsupp(f)=\emptyset$ if
and only if $f\in C^\infty(\rn)$.

To make the concept of singularity apply to our range of problems, we
will need to view the wavefront set as a subset of a conormal bundle
so it will be invariantly defined on manifolds \cite{Treves:1980vf}.
If $\Xi$ is a manifold and $y\in \Xi$, then the cotangent space of
$\Xi$ at $y$ is the set of all first order differentials (the dual
space to the tangent space $T_y(\Xi)$), and the cotangent bundle
$T^*(\Xi)$ is the vector bundle with fiber above $y\in \Xi$.  That is $T^*(\Xi)=\sparen{(y,\eta)\st y\in \Xi, \eta\in
T^*_y(\Xi)}$.

For example, the differentials $\dx_1$, $\dx_2,\dots$, and $\dx_n$ are
a basis of $T^*_x(\rn)$ for any $x\in \rn$.  For $\xi\in \rn$, we will
use the notation \[\xi\dx = \xi_1\dx_1+\xi_2\dx_2+\cdots+\xi_n\dx_n\in
T^*_x(\rn).\]  If $\phi\in \rr$ then $\dphi$ will be the differential
with respect to $\phi$ and $\dr$ and $\ds$ are defined analogously.

If $\phi$ is a function of $x,y$ then $\PD_x \phi$ is the differential
of $\phi$ in $x$, so if $\nabla_x \phi$ is the gradient of $\phi$ in
$x$, then \bel{def:PD}\PD_x\phi = \nabla_x \phi \dx.\ee The
differentials of $\phi$ in other variables will be defined in a
similar way.

\begin{definition}[Wavefront Set
{\cite{Hoermander03}}]\index{wavefront set}\label{def:wavefront set}
Let $f\in\Dc'(\R^n)$, $\xo\in \rn$ and $\xio\in \rn\smo$.  Then \emph{$f$
is microlocally smooth at $\xo$ in direction $\xio$} if there is a cutoff
function $\vp$ (a smooth function of compact support for which
$\vp(\xo)\neq 0$) and a conic neighborhood of $\xio$ such that the
localized Fourier transform  $\widehat{(\vp
f)}$ is rapidly decreasing at infinity in $V$.  

The \emph{wavefront set} of $f$ is the set $\WF(f)$ of all $(x,\xi\dx)\in
T^*(\rn)\smo$ such that $f$ \emph{is not} microlocally smooth at $x$
in direction $\xi$.
\end{definition}

\begin{example}\label{ex:char}
Let $K$ be a compact subset of $\rtwo$ bounded by a simple closed
smooth curve $B=\bd(\Omega)$. Then, the wavefront set of the
characteristic function of $K$, $\chi_\Omega$, is the set of
covectors conormal to the boundary of $K$:
\begin{equation} \label{eq:wavefront set characteristic function}
(y,\eta\dy)\in \WF(\chi_K)\quad\Leftrightarrow\quad
y\in\bd(K),\;\eta\in N_y,\; \end{equation} where $N_y$ is the set of
all vectors normal to $\bd(K)$ at $y$.  The proof of this fact is
non-trivial.  The wavefront set of the characteristic function of a
square is the set of conormals to the sides plus all covectors above
the corners of the square (see \cite[Example 9.12, p.\
219]{Petersen}).  This can be used to show \eqref{eq:wavefront set
characteristic function} by using the Inverse Function Theorem to find
a diffeomorphism to locally straighten out the boundary curve.  Then
note that the diffeomorphism takes conormals to one boundary (i.e.,
wavefront set) to conormals of the other.

However, if $\bd(K)$ is piecewise smooth and has a corner at a point
$y$ then all covectors above $y$ are in $\WF(\chi_K)$.  This follows
from the same example in \cite{Petersen} or a Radon line transform
argument.  
\end{example}

We now introduce Fourier Integral Operators (FIO) and provide some of
their properties. These operators are generalizations of differential
operators and they alter wavefront sets in precise ways.

\begin{definition}[{\cite{Treves:1980vf}}]\label{Phase Fcn FIO}
Let $Y\subset \rr^{m}$ and $X\subset \rr^{n}$ be open subsets. A real
valued function $\phi\in C^{\infty}(Y\times X\times
\rr^{N}\setminus\{0\})$ is called a \emph{phase function} if 
\begin{enumerate}
\item $\phi$ is positive-homogeneous of degree $1$ in $\xi$.  That is
$\phi(y,x,r\xi)=r\phi(x,y,\xi)$ for all $r>0$.  \item
$(\PD_{y}\phi,\PD_{\xi}\phi)$ and $(\PD_{x}\phi,\PD_{\xi}\phi)$ do not
vanish for all $(y,x,\xi)\in Y\times X\times\rr^{n}\setminus\{0\}$
where $\PD_{x}$ is defined in \eqref{def:PD} and the other operators
are defined in a similar way. 
\end{enumerate}
We define the auxiliary manifold
\bel{def:aux.manifold}
\Sigma_\phi = \sparen{(y,x,\xi)\in Y\times X\times \paren{\rn\smo}\st
\PD_{\xi}\phi(y,x,\xi) = 0}.
\ee

The phase function $\phi$ is called non-degenerate if the set
$\{\PD_{y,x,\xi}\paren{\frac{\partial \phi}{\partial \xi_{j}}}$,
$1\leq j\leq N\}$ is linearly independent on $\Sigma_\phi$.
\end{definition}

\begin{definition}[{\cite{Treves:1980vf}}]\label{DEF3:FIO}
A Fourier integral operator (FIO) $\Fc$ is defined as 
\[ \Fc u(y)=\int e^{i\phi(y,x,\xi)}p(y,x,\xi)u(x)\,d x \,d \xi,\]
where $\phi$ is a non-degenerate phase function, and the amplitude $p(y,x,\xi)\in C^{\infty}(Y\times X\times
\rr^{n})$ satisfies the following estimate: For every compact
set $K\subset Y\times X$ and for every multi-index $\A,\B,\g$, there
is a constant $C=C(K,\A,\B,\g)$ such that 
\[ \abs{D_{\xi}^{\A}D_{x}^{\B}D_{y}^{\g}p(y,x,\xi)}
\leq C (1+\norm{\xi})^{m-\abs{\A}} \mbox{ for all } x,y\in K \mbox{ and
for all } \xi\in \rr^{n}.\]

The \emph{canonical relation} of $\Fc$ is defined as
\bel{def:canonical relation} C:=\sparen{\paren{y,\PD_y \phi(y,x,\xi);
x, -\PD_x \phi(y,x,\xi)}: (y,x,\xi)\in \Sigma_{\phi}}.  \ee
\end{definition}%

Note that since the phase function $\phi$ is non-degenerate, the sets
$\Sigma_{\phi}$ and $C$ are smooth manifolds.  Furthermore, $C$ is
conic in the cotangent variables

To understand what Fourier integral operators and their compositions
do on wavefront sets, we will define compositions of canonical
relations.  Let $X$ and $Y$ be manifolds, and $A\subset T^\ast (X)\times
T^\ast (Y)$, then 
\begin{align}
	A' &= \sparen{(x,\xi;y,-\eta)\st  (x,\xi; y,\eta)\in A},\notag\\
	A^t &= \sparen{(y,\eta;x,\xi)\st  (x,\xi;y,\eta)\in
	A}.\label{def:At}
\end{align}
If $B\subset T^\ast (Y)\times T^*(X)$ and $C\subset T^*(X) $, we define
\begin{equation*}
	B\circ C = \sparen{(y,\eta)\in T^\ast (Y)\st  \exists (x,\xi)\in
	C\st (y,\eta;x,\xi)\in B},
\end{equation*}
and 
\begin{multline}
	A\circ B = \{(x,\xi; x',\xi')\in T^\ast (X)\times T^\ast
	(X)\st  \\ \exists (y,\eta)\in T^\ast (Y)\st  (x,\xi; y,\eta)\in A\text{ and }( y,\eta; x',\xi')\in B\}.
\end{multline}
For later use, we note the following relations: For $A$, $B$, and $C$
as above and $\tilde{C}\subset T^*(X)$
\begin{equation}
\label{eq:distributive and associative relations for compositions}
	B\circ \big(C\cup \tilde{C}\big) = \big(B\circ C\big) \cup
	\big(B\circ \tilde{C}\big),\qquad  A\circ \big(B\circ C\big) =\big(A\circ B \big)\circ C.
\end{equation}

Note that a linear operator, $L:\Ec'(X)\to \Dc'(Y)$ is \emph{properly
supported} when the following holds: if $S$ is the support of the
Schwartz kernel of $L$, then the projections from $S$ to $X$ and to
$Y$ are compact maps (i.e., the inverse image of any compact set is
compact).  This implies that $L:\Ec'(X)\to \Ec'(Y)$.  Now we make use
of the fact that Fourier integral operators satisfy the H\"ormander
Sato Lemma. 

\begin{theorem}[Th.\ 5.4, p.\
461 {\cite{Treves:1980vf}}]\label{thm:HS} Let $f$ be a distribution of
compact support and let $\Fc$ be a Fourier integral operator.  Then, 
\begin{equation}
\label{eq:wavefront set of a FIO}
	\WF(\Fc f)\subset C\circ\WF(f).
\end{equation}
If $\Fc$ is properly supported, then this inclusion is valid for any
distribution.
\end{theorem}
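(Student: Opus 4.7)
My plan is to prove the contrapositive: if $(y_0,\eta_0\dy)\notin C\circ \WF(f)$, then $(y_0,\eta_0\dy)\notin \WF(\Fc f)$. By definition of the composition and of $C$ in \eqref{def:canonical relation}, this hypothesis means that whenever $(y_0,x,\xi)\in \Sigma_\phi$ satisfies $\PD_y \phi(y_0,x,\xi)=\eta_0\dy$, the covector $(x,-\PD_x\phi(y_0,x,\xi))$ fails to lie in $\WF(f)$. To exploit this, I will pick a cutoff $\psi\in \Dc(Y)$ with $\psi(y_0)\neq 0$ and a small conic neighborhood $V$ of $\eta_0$, and show that $\widehat{\psi\Fc f}(\lambda\eta)$ is rapidly decaying as $\lambda\to\infty$, locally uniformly for $\eta\in V$.

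The first step is to insert the oscillatory representation of $\Fc$ and, after a justification using the compact support of $f$ (or proper support of $\Fc$ in the second statement), to write
\[
\widehat{\psi\Fc f}(\lambda\eta)=\Big\langle f(x),\, I_\lambda(x,\eta)\Big\rangle,\qquad I_\lambda(x,\eta)=\iint e^{i\Phi(y,x,\xi,\lambda\eta)}\psi(y)p(y,x,\xi)\,dy\,d\xi,
\]
with combined phase $\Phi(y,x,\xi,\lambda\eta)=\phi(y,x,\xi)-\lambda y\cdot\eta$. I will then run a stationary-phase analysis in $(y,\xi)$: the critical set in these variables is exactly the condition that $(y,x,\xi)\in\Sigma_\phi$ and $\PD_y\phi=\lambda\eta\dy$. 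Using homogeneity of $\phi$ in $\xi$, the critical $\xi$ scales linearly in $\lambda$, so I will rescale $\xi\mapsto\lambda\xi$ and apply a dyadic/conic partition of unity in $\xi$ together with a spatial cutoff $\chi(x)$ localizing $x$ near the finitely many projections $x_j$ of the critical set above $(y_0,\eta_0\dy)$.

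Away from the critical set (in $y,\xi$, or in $x$ outside all the chosen neighborhoods), I will integrate by parts with a standard first-order operator of the form $L=\frac{1}{i|\nabla\Phi|^2}\overline{\nabla\Phi}\cdot\nabla$ in the appropriate variable to gain an arbitrary factor of $\lambda^{-N}$, which produces rapidly decaying contributions. In the contributions that remain, $I_\lambda(x,\eta)$ is concentrated microlocally at covectors $(x,-\PD_x\phi(y,x,\xi)/|\xi|)$ lying in a prescribed conic neighborhood $W_j$ of each $(x_j,\xi_j\dx)$. By hypothesis, $(x_j,\xi_j\dx)\notin\WF(f)$, so I can further sharpen the cutoffs to assume each $W_j$ is a conic neighborhood on which $f$ is microlocally smooth; then the pairing $\langle f,I_\lambda\rangle$ in each piece becomes $\langle B_j f,\tilde I_\lambda\rangle$ for a properly supported pseudodifferential cutoff $B_j$ with $B_j f\in C^\infty$, and hence rapidly decaying in $\lambda$.

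The main technical obstacle will be the bookkeeping in the second step: I need to produce a finite, locally uniform (in $\eta\in V$) partition that simultaneously handles the non-stationary regions by integration by parts and traps all stationary contributions inside neighborhoods on which $f$ is microlocally smooth. This relies on a compactness argument on the $\xi$-sphere, once the support in $y$ is shrunk around $y_0$ and $\eta$ is kept in a narrow cone $V$, so that the set of critical directions $(x,-\PD_x\phi/|\xi|)$ varies continuously and lies in a compact subset of the open complement of $\WF(f)$. Once this is in place, combining the rapid decay from the non-stationary pieces with the $C^\infty$ behavior of the $B_j f$ pieces yields the required estimate on $\widehat{\psi\Fc f}(\lambda\eta)$ and hence \eqref{eq:wavefront set of a FIO}. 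The proper-support case follows because proper support allows the same localization argument without requiring $f$ to have compact support.
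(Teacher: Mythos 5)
The paper does not prove this statement at all: Theorem \ref{thm:HS} is quoted verbatim as Theorem 5.4, p.~461 of \cite{Treves:1980vf} and is used as a black box, so there is no internal proof to compare your argument against. What you have written is, in outline, the standard textbook proof of the H\"ormander--Sato lemma (contrapositive; localize by a cutoff $\psi$ near $y_0$ and a narrow cone $V$ around $\eta_0$; split the oscillatory integral for $\widehat{\psi\,\Fc f}(\lambda\eta)$ into a non-stationary region killed by integration by parts and a stationary region whose critical covectors $\paren{x,-\PD_x\phi}$ avoid $\WF(f)$ by hypothesis), and the strategy is sound.

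A few points you should tighten before this counts as a proof rather than a plan. First, $I_\lambda(x,\eta)=\iint e^{i\Phi}\psi p\,dy\,d\xi$ is not an absolutely convergent integral for a general amplitude of order $m$; you must regularize it as an oscillatory integral (integration by parts in $y$ using that $\PD_y\phi-\lambda\eta\dy\neq 0$ when $\norm{\xi}/\lambda$ is outside a fixed band, which is where your rescaling and dyadic decomposition actually do the work) before you are entitled to pair it with $f$. Second, the critical set above $(y_0,\eta_0\dy)$ need not project to \emph{finitely many} points $x_j$; it is a compact subset of $T^*(X)\smo$ (here compact support of $f$, or proper support of $\Fc$, is exactly what you need), and the correct statement is that it admits a finite cover by conic neighborhoods on each of which $f$ is microlocally smooth --- your closing compactness remark is the right idea, but it should replace, not supplement, the claim of finitely many $x_j$. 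Third, when you insert the pseudodifferential cutoffs $B_j$ you must also dispose of the pairing of $(1-B_j)f$ with the stationary piece of $I_\lambda$; that term is controlled precisely because the $x$-Fourier transform of the localized $I_\lambda$ is rapidly decaying outside a cone disjoint from $\WF\paren{(1-B_j)f}$, and this disjointness has to be arranged uniformly for $\eta\in V$. None of these is a wrong turn --- they are the standard technical content of the proof --- but as written they are asserted rather than established.
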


Furthermore we have that the adjoint of a FIO is a FIO.

\begin{theorem}[Thm.\ 4.2.1 p.\ 174
{\cite{Ho1971}}]\label{thm:FIOadjoint} If $\Fc$ is an FIO associated
to the canonical relation $C$, then the adjoint $\Fc^*$ is an FIO
associated to $C^t$.\end{theorem}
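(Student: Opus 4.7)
The plan is to derive an explicit oscillatory-integral representation of $\Fc^*$ by formally interchanging the order of integration in the defining pairing $\langle \Fc u,v\rangle=\langle u,\Fc^* v\rangle$, and then to read off its phase function, amplitude, and canonical relation. For $u\in\Dc(X)$ and $v\in\Dc(Y)$, writing out
\[
\langle \Fc u,v\rangle=\int\!\!\int\!\!\int e^{i\phi(y,x,\xi)}p(y,x,\xi)u(x)\overline{v(y)}\,dx\,d\xi\,dy
\]
and swapping the $x$- and $y$-integrations (to be justified below) produces the candidate
\[
\Fc^* v(x)=\int e^{i\tilde\phi(x,y,\xi)}\tilde p(x,y,\xi)v(y)\,dy\,d\xi, \qquad \tilde\phi(x,y,\xi):=-\phi(y,x,\xi), \qquad \tilde p(x,y,\xi):=\overline{p(y,x,\xi)}.
\]

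Next, I would verify that $(\tilde\phi,\tilde p)$ satisfy the hypotheses of Definitions \ref{Phase Fcn FIO} and \ref{DEF3:FIO}. Since $\tilde\phi$ is obtained from $\phi$ by swapping the roles of $x$ and $y$ and changing sign, positive homogeneity of degree one in $\xi$, the non-vanishing conditions on $(\PD_y\tilde\phi,\PD_\xi\tilde\phi)$ and $(\PD_x\tilde\phi,\PD_\xi\tilde\phi)$, and non-degeneracy all transfer immediately by sign change and relabeling; the symbol estimates of order $m$ for $\tilde p$ follow from those for $p$ since complex conjugation preserves absolute values of derivatives. Applying \eqref{def:canonical relation} to $(\tilde\phi,\tilde p)$ then gives
\[
\tilde C=\sparen{(x,\PD_x\tilde\phi;\,y,-\PD_y\tilde\phi)\st (x,y,\xi)\in\Sigma_{\tilde\phi}}=\sparen{(x,-\PD_x\phi;\,y,\PD_y\phi)\st (y,x,\xi)\in\Sigma_\phi},
\]
because $\Sigma_{\tilde\phi}$ is just $\Sigma_\phi$ with $x$ and $y$ interchanged. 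Comparing with the canonical relation $C=\sparen{(y,\PD_y\phi;\,x,-\PD_x\phi)\st (y,x,\xi)\in\Sigma_\phi}$ of $\Fc$, every element $(x,\xi;y,\eta)\in\tilde C$ corresponds to a unique element $(y,\eta;x,\xi)\in C$, so $\tilde C=C^t$ in the sense of \eqref{def:At}.

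The main obstacle will be justifying the interchange of the $x$- and $y$-integrations rigorously, since the oscillatory integrals do not converge absolutely. The standard remedy is to insert a cutoff $\chi(\epsilon\xi)$ with $\chi\in\Dc(\rn)$ and $\chi(0)=1$, apply Fubini to the now absolutely convergent regularized pairing, and pass to the limit $\epsilon\to 0^+$. Convergence of this limit is secured by the usual integration-by-parts trick using a first-order operator $L$ with $Le^{i\phi}=e^{i\phi}$, whose existence off $\Sigma_\phi$ is guaranteed by the non-vanishing of $(\PD_x\phi,\PD_\xi\phi)$ and $(\PD_y\phi,\PD_\xi\phi)$ built into Definition \ref{Phase Fcn FIO}. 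Once this standard reduction is in place, the computation above identifies $\Fc^*$ as an FIO of order $m$ associated to $C^t$, completing the proof.
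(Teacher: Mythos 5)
The paper does not prove this statement: it is quoted verbatim from H\"ormander (Theorem 4.2.1 of the cited reference) and used as a black box, so there is no in-paper argument to compare against. Your proposal reconstructs the standard proof correctly: the identification of the adjoint's phase as $-\phi(y,x,\xi)$ and amplitude as $\overline{p(y,x,\xi)}$ via the sesquilinear pairing is right, the verification that homogeneity, the non-vanishing conditions, non-degeneracy, and the symbol estimates all survive the sign change and relabelling is sound, and the resulting canonical relation $\sparen{(x,-\PD_x\phi;\,y,\PD_y\phi)}$ is exactly $C^t$ under the paper's definition \eqref{def:At}. You also correctly flag the only real technical issue, the interchange of integrals, and the regularization by $\chi(\epsilon\xi)$ together with integration by parts against a first-order operator (whose existence rests on the non-vanishing of $(\PD_y\phi,\PD_\xi\phi)$ and $(\PD_x\phi,\PD_\xi\phi)$) is the standard and adequate fix; this is essentially H\"ormander's own route.
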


These theorems and the composition relations for FIO will be the keys
to our general strategy in the next section and the proofs in the
subsequent sections.

\section{General Strategy}
\label{sect:general strategy} 

In this section, we will outline the general ideas we will apply in
the following sections to understand visible and added singularities
in limited data tomography. By presenting the ideas in general, we
emphasize the broad applicability of this mathematics.

The imaging operator will be denoted $\Mc:\Ec'(\Omega)\to\Ec'(\Xi)$,
where the \emph{object space} $\Omega$ is a region in space to be
imaged and the \emph{data space} $\Xi$ is a space that parameterizes
the data.  For the planar X-ray transform, the imaging operator is the
X-ray transform, $\Omega$ is an open set in $\rtwo$ containing the
object to be imaged, and $\Xi$ is the set of lines in $\rtwo$. In what
follows the operator $\Mc$ is assumed to be a FIO.  In this article,
we consider incomplete data problems in which the data are taken only
on a closed set $A\subset \Xi$.  The resulting forward operator can be
written \bel{MA} \Ma f = \chia \Mc,\ee where $\chia$ is the
characteristic function of $A$ and the product just restricts the data
to the set $A$.  In the cases we consider, the reconstruction operator
is of the form \bel{MA-reconstruction}\Mst P\Ma,\ee where $\Mst$ is an
appropriate dual or backprojection operator to $\Mc$ that takes
functions on the data space to functions on the object space and $P$
is a differential or pseudodifferential operator.  Equation
\eqref{MA-reconstruction} models many standard reconstruction
algorithms, including limited angle filtered backprojection
\cite{Natterer86}, Lambda tomography \cite{FFRS,FRS}, and algorithms
in thermoacoustic tomography \cite{Finch-P-R, Kun:sphere} and sonar
\cite{And}, and radar \cite{NC2002}.

Since $\Mc$ is assumed to be a FIO, Theorem \ref{thm:HS} and the
wavefront relation \eqref{eq:wavefront set of a FIO} tells what $\Mc$
and $\Mc^*$ do to $\WF(f)$.  Our next theorem tells what
multiplication by $\chia$ does to the wavefront set.  It is a special case
of Theorem 8.2.10 in \cite{Hoermander03}.

 \begin{theorem}\label{thm:WF mult} Let $u\in \Dc'(\Xi)$, and
let $A$ be a closed subset of $\,\Xi$ with nontrivial interior.  If
the \emph{non-cancellation condition} \bel{non-cancellation}
\forall\,(y,\xi)\in \WF(u),\ 
 (y,-\xi)\notin \WF(\chia)\ee holds, then the product
$\chia u$ can be defined as a distribution.  In this case, we have
\bel{WF of a product}\WF(\chia u) \subset \Qc(A,\WF(u)),\ee
where, for
$W\subset T^*(\Xi)$, \bel{def:Q}\begin{aligned}\Qc(A,W) :=&
\big\{(y,\xi+\eta)\st y\in A\,, \bparen{(y,\xi)\in W\text{\rm\ or }
\xi = 0} \text{\rm\ and } \big[(y,\eta)\in \WF(\chia)\text{\rm\ or }
\eta = 0\big]\big\}\,.\end{aligned}\ee
\end{theorem}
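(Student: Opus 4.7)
The plan is to reduce the claim directly to H\"ormander's multiplication theorem for distributions \cite[Theorem 8.2.10]{Hoermander03}, applied to the pair $u\in\Dc'(\Xi)$ and $\chia\in\Dc'(\Xi)$. The noncancellation condition \eqref{non-cancellation} is precisely the hypothesis of that theorem in this setting: no covector of $\WF(u)$ is the negative of a covector in $\WF(\chia)$. H\"ormander's theorem then simultaneously gives that the product $\chia u$ is well defined in $\Dc'(\Xi)$ and that its wavefront set obeys
\[
\WF(\chia u)\subset\bigl\{(y,\xi+\eta)\st (y,\xi)\in\WF(u)\cup\{0\},\ (y,\eta)\in\WF(\chia)\cup\{0\}\bigr\},
\]
where by convention the pair $(y,0)$ is always admissible (the ``zero-section'' contribution) and the resulting zero covector is excluded from the wavefront set.

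The second step is to incorporate the support constraint $y\in A$ that appears in the definition of $\Qc$. This is immediate: since $\chia$ vanishes identically on $\Xi\setminus A$, so does $\chia u$, and hence $\chia u$ is smooth there; no covector above $\Xi\setminus A$ can belong to $\WF(\chia u)$. Combining this restriction with the H\"ormander bound and expanding the four cases obtained by letting each of $\xi,\eta$ be either zero or in the corresponding wavefront set reproduces exactly the definition \eqref{def:Q} of $\Qc(A,\WF(u))$.

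The substantive content is carried entirely by the cited H\"ormander theorem, so there is no real analytic obstacle; the only work is bookkeeping. Two points worth verifying carefully are: (a) the case $\xi=\eta=0$ is automatically excluded because the zero covector never lies in a wavefront set; and (b) $\WF(\chia)$ is concentrated above $\bd(A)$, since $\chia$ equals $1$ on $\intt(A)$ and $0$ on $\intt(\comp{A})$, where it is smooth. Observation (b) is not needed to prove the theorem itself, but it will be essential when we apply the theorem, since it localizes the ``added singularities'' in $\Qc(A,\WF(u))\setminus\WF(u)$ to covectors sitting above $\bd(A)$.
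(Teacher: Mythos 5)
Your proposal is correct and follows essentially the same route as the paper: the paper also derives the result as a special case of H\"ormander's Theorem 8.2.10, adding only the observation that the constraint $y\in A$ may be imposed because $\chia u$ vanishes, hence is smooth, on the open set $\Xi\setminus A$. Your extra remarks (a) and (b) are accurate but not needed for the proof itself.
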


Note that the condition ``$y\in A$'' is not in \eqref{def:Q} in
H\"{o}rmander's theorem, but we can include this condition because
$\chia u$ is zero (hence smooth) off of the closed set $A$.

An auxiliary lemma will make the paradigm easier to apply.

\begin{lemma}\label{lemma:composition identity}  Let  $\Mc:\Ec'(X)\to
\Dc'(\Xi)$ be a FIO with canonical relation $C$ and let $A$ be a
closed subset of $\Xi$.  Assume the non-cancellation condition
\eqref{non-cancellation} holds for $\Mc$ and $\chia$ so the Schwartz
kernel of $\Ma=\chia \Mc$ is a distribution.  Assume the linear
operator $\Ma:\Ec'(X)\to \Ec'(\Xi)$ (i.e., for each $f\in \Ec'(X)$,
the distribution $\Ma(f)$ has compact support).  Let $P$ be a properly
supported pseudodifferential operator (or $\Mst:\Dc'(\Xi)\to
\Dc'(X))$.  Then, 
\begin{gather}\label{composition-genl:Mf}\WF(P \Ma f)\subset 
\Qc(A,C\circ \WF(f)),\\
\label{composition-genl}  
\WF(\Mst P \Ma f)\subset C^t\circ\Qc(A,C\circ \WF(f)).\end{gather}
\end{lemma}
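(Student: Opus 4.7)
The plan is to chain together the three tools from Section \ref{sect:microlocal analysis}: the H\"ormander--Sato relation (Theorem \ref{thm:HS}), the adjoint theorem (Theorem \ref{thm:FIOadjoint}), and the multiplication theorem (Theorem \ref{thm:WF mult}). No single deep fact is needed; the task is really to string these together carefully, tracking support conditions so each composition is legal.

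For \eqref{composition-genl:Mf}, I would first observe that $f\in\Ec'(X)$ has compact support, so Theorem \ref{thm:HS} applies to $\Mc$ and yields
\[
\WF(\Mc f)\subset C\circ\WF(f).
\]
Because the non-cancellation condition \eqref{non-cancellation} holds, Theorem \ref{thm:WF mult} gives $\WF(\Ma f)=\WF(\chia\Mc f)\subset \Qc(A,\WF(\Mc f))$, and I would then note the routine monotonicity of $\Qc(A,\cdot)$ in its second argument (immediate from \eqref{def:Q}) to conclude
\[
\WF(\Ma f)\subset \Qc\bigl(A,C\circ\WF(f)\bigr).
\]
The pseudodifferential operator $P$ has canonical relation the diagonal, so a second application of Theorem \ref{thm:HS} (or the standard fact that $\Psi$DOs are microlocal) gives $\WF(P\Ma f)\subset \WF(\Ma f)$, proving \eqref{composition-genl:Mf}. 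Here the assumption that $P$ is properly supported is what lets it act on $\Ma f$ and preserve compact support.

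For \eqref{composition-genl}, I would use Theorem \ref{thm:FIOadjoint} to identify $\Mst$ as a FIO with canonical relation $C^t$. Since $\Ma f$ has compact support (by hypothesis) and $P$ is properly supported, $P\Ma f\in\Ec'(\Xi)$, so Theorem \ref{thm:HS} applies directly to give
\[
\WF(\Mst P\Ma f)\subset C^t\circ\WF(P\Ma f).
\]
Feeding in the bound from \eqref{composition-genl:Mf} and using associativity of $\circ$ from \eqref{eq:distributive and associative relations for compositions} yields the claim.

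The only place that demands any care is verifying that at each stage the relevant operator is being applied to a distribution it is allowed to act on (i.e., compact or properly supported), which the hypotheses on $\Ma$ and $P$ are precisely designed to provide. Beyond this bookkeeping, there is no essential difficulty; the lemma is a clean packaging of the three theorems so that later sections can read off the wavefront set of a reconstruction $\Mst P\Ma f$ by computing the two geometric objects $C\circ\WF(f)$ and $\WF(\chia)$ and then forming $C^t\circ\Qc(A,\cdot)$.
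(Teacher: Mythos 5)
Your proof is correct and follows essentially the same route as the paper's: H\"ormander--Sato for $\Mc$, Theorem \ref{thm:WF mult} together with the monotonicity of $\Qc(A,\cdot)$ for the cutoff, microlocality of $P$, and then Theorems \ref{thm:FIOadjoint} and \ref{thm:HS} applied to $\Mst$ with the composition rules \eqref{eq:distributive and associative relations for compositions}. The only difference is that you spell out the step $\WF(P\Ma f)\subset\WF(\Ma f)$ and the support bookkeeping, which the paper leaves implicit.
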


\begin{proof}  By  Theorem \ref{thm:HS}, $\WF(\Mc
f)\subset C\circ \WF(f)$.  Then one uses Theorem \ref{thm:WF mult} and
the definition of $\Qc$, \eqref{def:Q}, to infer 
 \[\WF(\chia\Mc f)\subset \Qc\paren{A,\WF(\Mc f)}
 \subset\Qc\paren{A,C\circ \WF(f)}.\] We also just used the fact that
when $W'\subset W\subset T^*(\Xi)$, then $\Qc(A, W')\subset \Qc(A,W)$.
This finishes the proof of \eqref{composition-genl:Mf}.  Now, using
Theorem \ref{thm:HS} and the composition rules \eqref{eq:distributive
and associative relations for compositions} for $C^t$, one proves
\eqref{composition-genl} from \eqref{composition-genl:Mf}.
\end{proof} 

Here is the outline of our paradigm; it can be used for a range of
limited data problems to understand visible and added singularities.

\begin{enumerate}[(a)]
\item\label{paradigm:start} Confirm the forward operator $\Mc$ is a
FIO and calculate its canonical relation, $C$.

\item Choose a closed limited data set $A\subset \Xi$ and calculate
$\WF(\chia)$ (see Example \ref{ex:char}).

\item Make sure the non-cancellation condition
\eqref{non-cancellation} holds for $\chi_A$ and $\Mc f$.  This can be
done in general by making sure it holds for $(y,\eta)\in C\circ
\paren{T^*(\Om)\smo}$ since that is the largest that $\WF(\Mc f)$ can be (by
Theorem \ref{thm:HS} since $\WF(f)\subset T^*(\Om)\smo $).  

\item \label{paradigm:Mf} Then, calculate $\Qc(A,C\circ \WF(f))$.

\item By Lemma \ref{lemma:composition identity} \bel{final WF
containment}\WF(\Mst P \Ma f)\subset C^t\circ\Qc\paren{A,
C\circ\WF(f)},\ee so calculate $C^t\circ\Qc\paren{A, C\circ\WF(f)}$ to
find possible visible singularities and added artifacts.

\end{enumerate}

\section{Characterization of Limited Data Artifacts in PAT/TAT and
Sonar}
\label{sect:characterization} 

Using the paradigm of section \ref{sect:general strategy}, we now
describe the visible and added singularities for photo- and
thermoacoustic tomography (PAT and TAT, respectively), and sonar with
constant sound speed.  Proofs will be given in the appendix.

The same arguments can be used to prove the theorems in
\cite{FrikelQuinto2013} about limited angle tomography, even for the
generalized X-ray transform in the plane with arbitrary smooth
measures.  The arguments in \cite{FrikelQuinto2013,Katsevich:1997}
are more elementary and do not require the theory of FIO, but they do
not apply to generalized transforms.

\begin{remark}\label{remark:generalized}
Although we state the theorems for the circular and spherical
transforms with standard measures, our theorems are valid more
generally.  Theorems \ref{thm:visible and added sing} and
\ref{thm:reduction:circle} are valid for the generalized circular mean
transform with a smooth nowhere zero weight $\mu(\phi,x)\d x$ in
\eqref{def:Mc:circle}.  Theorems \ref{thm:visible and added sing
sphere} and \ref{thm:reduction:sphere} are valid for the generalized
circular mean transform with a smooth nowhere zero weight $\mu(y,x)\d
x$ in \eqref{def:Ms}.  In fact, our theorems are valid for any FIO
associated to the same canonical relation since our proofs use only
the properties of the operators as FIO and their associated their
canonical relations (when the operators can be composed).
\end{remark}

 \subsection{Photo- and thermoacoustic tomography for planar data
(circular Radon transform)}\label{sect:TAT}

In this section, we consider the so-called circular mean Radon
transform in 2D, which is a standard model for sectional imaging
setups of photoacoustic tomography with constant sound speed. We refer
to \cite{Elbau:2012df,
HaltmeierSchusterScherzer2005,KuchmentKunyanski2010} for overviews of
the mathematics behind PAT and TAT.  The forward transform is defined
by \bel{def:Mc} \Mc f(\xi,r) = \frac{1}{2\pi}\int_{u\in S^1}
f(\xi+ru)\d u,\quad (\xi,r)\in S\times (0,\infty),
\end{equation}
where $S$ is a smooth curve in $\R^2$. 

We will consider only functions and distributions on $\rtwo$ that are
supported inside the open unit disk $D=\sparen{x\in \rtwo\st \norm{x}<
1}$.  We will assume the detectors are on the circle $S=S^1$, and use
the parameterizations for vectors in $S^1$ and for circles
respectively
\[\th(\phi)=(\cos\phi,\sin\phi) \text{ for }\phi\in [0,2\pi],\quad
C(\phi,r) = \sparen{x\in \rtwo\st \norm{x-\th(\phi)}=r}.\] The
tomographic data will have the following parametrization
\bel{def:Mc:circle}\text{for }\ (\phi,r)\in
\Xi: = [0,2\pi]\times (0,\infty), \quad g(\phi,r)=\Mc
f(\phi,r):=\frac{1}{2\pi r}\int_{x\in C(\phi,r)} f(x)\d x,\ee where we identify
$0$ and $2\pi$ or, equivalently, consider only functions and
distributions on $\Xi$ that are $2\pi$ periodic in $\phi$.  Note that
the measure $\d x$ in this integral is the arc length measure. 

Then, the dual transform to $\Mc$ (using the standard measures on $D$
and $\Xi$) is \[\Mst g(x) = \int_{0}^{2\pi}
g(\phi,\norm{x-\th(\phi)})\frac{1}{2\pi \norm{x-\th(\phi)}}\d \phi.\]

Because $D$ is the open disk with boundary $S^1$, $\Mc:\Dc(D)\to
\Dc(\Xi)$ is continuous.  Therefore, its adjoint, $\Mst:\Dc'(\Xi)\to
\Dc'(D)$ is weakly continuous.  Similarly, $\Mc:\Ec'(D)\to \Ec'(\Xi)$
is weakly continuous.

We will consider the limited data problem for this transform
specifying circular means with centers $\th(\phi)$ for $\phi \in
[a,b]$ with $b-a<2\pi$.  We define
\begin{equation}
\label{eq:restricted spherical mean transform} 
  \Mc_{[a,b]} f =
\chi_{A}\cdot\Mc f\ \ \text{ where $A=[a,b]\times (0,\infty)$}\,.
\end{equation}
  The wavefront
of $\chi_{A}$ is given by
\begin{equation}
\label{eq:WF hard cutoff} \WF(\chi_A) =
\sparen{((\phi,r),\nu\dphi)\st
\phi\in\sparen{a,b},\nu\neq0,r>0}.
\end{equation}

Before we state the main theorem of the section, we need to define
several concepts related to the microlocal analysis of the circular
mean transform.  For $\phi\in[0,2\pi]$ and $x\in D$, let
\bel{def:n}\npx = \frac{x-\th(\phi)}{\norm{x-\th(\phi)}}\,.\ee Then
$\npx$ the outward unit normal vector at $x$ to the circle
$C(\phi,\norm{x-\th(\phi)})$ centered at $\th(\phi)$ and containing
$x$.

For $A\subset [0,2\pi]$ we define the set \bel{def:Va} \Vc_A :=
\sparen{(x,\xi\dx)\in T^*(D)\st \exists \phi\in A,\exists \alpha \neq
0, \xi = \alpha\npx}.\ee In our next theorem, we show $\Vab$
is the set of \emph{possible visible singularities}, i.e., those that
can be imaged by $\Mst P \Mab $ (singularities for $\phi\in
\sparen{a,b}$ might be cancelled).

For $f\in \Ec'(D)$, we define the set \bel{def:Aab}\begin{aligned}
\Aab(f):=&\big\{(x,\xi\dx)\in T^*(D)\st \exists \phi\in \sparen{a,b}\exists r\in
(0,2),\exists \alpha\neq 0, \\&\qquad
\exists \tx\in C(\phi,r)\cap D,\
\paren{\tx,\alpha\n(\phi,\tx)\dx}\in WF(f)\\
&\qquad \qquad\text{ and }x\in C(\phi,r)\cap D, \xi=\alpha\n(\phi,x)
\big\}.\end{aligned}\ee The set $\Aab(f)$ will include \emph{added
artifacts} in the reconstruction operator $\Mst P \Mab (f)$.

\begin{theorem}[Visible and Added Singularities]\label{thm:visible and
added sing}  Let $f\in \Ec'(D)$
and let $P$ be a pseudo\-differential operator on $\Dc'(\Xi)$.  Then, 
\bel{circular mean singularities}
 \WF(\Mst P \Mab f)\subset \paren{\WF(f)\cap \Vab}\cup \Aab(f).\ee
\end{theorem}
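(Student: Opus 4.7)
The plan is to apply the paradigm of Section \ref{sect:general strategy} directly. First I would verify that $\Mc$ is a Fourier integral operator, using the representation
\[
\Mc f(\phi, r) = \frac{1}{(2\pi)^2 r} \int\!\!\int f(x)\, e^{i\eta_r(r - \norm{x - \theta(\phi)})}\, \d x\, \d\eta_r,
\]
whose phase function is non-degenerate. A direct calculation of its differentials on the auxiliary manifold $\{r = \norm{x - \theta(\phi)}\}$ produces the canonical relation
\[
C = \bigl\{\,((\phi, r,\, \alpha\,\h{\n(\phi,x), \theta'(\phi)}\dphi + \alpha\,\dr);\,(x,\, \alpha\,\n(\phi,x)\,\dx))\,:\, r = \norm{x - \theta(\phi)},\ \alpha \neq 0\,\bigr\}.
\]
The crucial structural observation is that every covector in $C \circ T^*(D)$ has nonzero $\dr$-component, while $\WF(\chi_A)$ is purely $\dphi$-directed by \eqref{eq:WF hard cutoff}. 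Thus the non-cancellation condition \eqref{non-cancellation} is automatic, and Lemma \ref{lemma:composition identity} gives $\WF(\Mst P \Mab f) \subset C^t \circ \Qc(A,\, C \circ \WF(f))$.

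Next I would unpack $\Qc(A, C \circ \WF(f))$ according to the three cases built into \eqref{def:Q}. The pure-$\chi_A$ case ($\xi = 0$, $(y,\eta) \in \WF(\chi_A)$) yields $\dphi$-only covectors with no preimage under $C^t$, and so contributes nothing. The pure-data case ($(y,\xi) \in C \circ \WF(f)$, $\eta = 0$, $y \in A$) reduces to $C^t \circ C \circ \WF(f)$ subject to $\phi \in [a,b]$. The perturbation case ($(y,\xi) \in C \circ \WF(f)$ and $(y,\eta) \in \WF(\chi_A)$, $y \in A$) forces $\phi \in \sparen{a,b}$ and produces data covectors $((\phi, r_0),\,(\alpha\,\h{\n(\phi, x_0),\theta'(\phi)} + \nu)\dphi + \alpha\,\dr)$ for each singularity $(x_0, \alpha\,\n(\phi, x_0)) \in \WF(f)$, with $r_0 = \norm{x_0 - \theta(\phi)}$ and $\nu \neq 0$.

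The main obstacle is showing the pure-data case collapses to exactly $\WF(f) \cap \Vab$, ruling out spurious reflected-point artifacts that would otherwise arise because the left projection of $C$ fails to be globally injective. For this I would observe that any two $x, x' \in C(\phi, r)$ with $\h{\n(\phi, x), \theta'(\phi)} = \h{\n(\phi, x'), \theta'(\phi)}$ are reflections across the tangent line to $S^1$ at $\theta(\phi)$. Taking $\theta(\phi) = (1,0)$ and $\theta'(\phi) = (0,1)$ without loss of generality, and writing $x = (1+s, t)$, $x' = (1-s, t)$, one checks that $x \in D \iff s < 0$ while $x' \in D \iff s > 0$; hence $x \in D$ forces $x' \notin \overline{D}$. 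Therefore $C^t \circ C$ restricted to $T^*(D) \times T^*(D)$ is the diagonal, and the pure-data case yields exactly $\WF(f) \cap \Vab$. The same geometric uniqueness also resolves the perturbation case: the equation $\h{\n(\phi, x), \theta'(\phi)} = \h{\n(\phi, x_0), \theta'(\phi)} + \nu/\alpha$ has a unique solution $x \in C(\phi, r_0) \cap D$, and $x$ sweeps out the arc $C(\phi, r_0) \cap D$ as $\nu$ varies, yielding covectors $(x, \alpha\,\n(\phi, x))$ matching \eqref{def:Aab} with $\tilde{x} = x_0$. Assembling the three cases yields the containment \eqref{circular mean singularities}.
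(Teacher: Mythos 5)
Your proposal is correct and follows essentially the same route as the paper's proof: verify the non-cancellation condition via the $\dr$-components, apply Lemma \ref{lemma:composition identity}, split $\Qc(A,C\circ\WF(f))$ into the same three pieces, and compose each with $C^t$, with the pure-$\chi_A$ piece vanishing, the pure-data piece giving $\WF(f)\cap\Vab$, and the mixed piece giving $\Aab(f)$. The only difference is presentational: you derive the canonical relation from an explicit phase function and spell out the reflection argument showing at most one of the two candidate points on $C(\phi,r)$ lies in $D$, where the paper cites \cite{AQ1996a} for the canonical relation and asserts the injectivity statement ``using geometry'' via the maps $c_1,c_2$ and $\cinv$.
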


One would expect that an inclusion $\WF(f)\cap
\Vc_{(a,b)}\subset\WF(\Mst P \Mab f)$ holds, which is true for sonar,
as proven in Section \ref{sect:sonar}, and for limited angle x-ray
tomography with reconstruction operators considered in
\cite{FrikelQuinto2013}.  We will discuss why this is not possible, in
general, for the PAT transform in Remark \ref{remark:elliptic}.

\begin{remark}\label{remark:A and V}  In general, Radon transforms
detect singularities conormal to the set being integrated over, and so
the visible singularities should be conormal to circles in the data
set.  The set $\Vab$ is the collection of conormals to circles
$C(\phi,r)$ for $\phi\in [a,b]$, and, according to Theorem
\ref{thm:visible and added sing}, $\Vab\cap \WF(f)$ is the set of
possible visible singularities in $\Mst P \Mab f$.  This idea of
visible singularities being (co)normal to the manifold of integration
is well-known and is for example, discussed in \cite{Palamodov:JFAA,Quinto93}.  This follows from the pioneering work of Guillemin
\cite{Gu1975, GS1977} showing Radon transforms are FIO associated to a
conormal bundle.

There are also possible added singularities, $\Aab(f)$, and they come
about from the data at the limits of the angular range, $a$ and $b$.
Remarkably, if $f$ has a singularity at one point on the circle
$C(\phi,r)$ (for $\phi=a$ or $\phi =b$) conormal to the circle, then
that singularity can be spread over the entire circle in the
reconstruction.  

Figure \ref{fig:illustration of artifacts} illustrates this perfectly
(as do the reconstructions in Section \ref{sect:numerics}).  The data
are taken for $\phi\in [0,\pi]$.  Every singularity in $D$ is visible
since, for every $x\in D$ and $\xi\in \rtwo\smo$, there is a $\phi\in
[0,\pi]$ with $\xi=\alpha \n(\phi,x)$ for some $\alpha \neq 0$; that
is, for every covector $(x,\xi\dx)\in T^*(D)\smo$, there is a $\phi\in
[0,\pi]$ such that $(x,\xi\dx)$ is conormal to the circle
$C(\phi,\norm{x-\th(\phi)})$.

The added singularities are as predicted by the theorem. They appear
on circles $C(\phi,r)$ when $\phi = 0,\pi$, the endpoints of the
interval of detectors and when some singularity of $f$ is conormal to
the circle.  For the simple phantom in Figure \ref{fig:illustration of
artifacts} there are four circles of added artifacts, and they are
tangent to the boundary of the object at points on the horizontal
axis.
\end{remark}

Next, we will show that the same artifact reduction procedure suggested in
\cite{FrikelQuinto2013} is valid for this case, too.  Let $a'$ and
$b'$ be chosen so $a<a'<b'<b$ and choose a smooth cutoff function
$\vp:[0,2\pi]\to\rr$ supported in $(a,b)$ and equal to one on
$[a',b']$.  Define the operator $\Kvp:\Dc'(\Xi)\to \Dc'(\Xi)$ by
\bel{def:K}\Kvp g = \vp g.\ee Then $\Kvp \Mc$ uses only data for
$\phi\in [a,b]$ but it provides a smooth cutoff.  This discussion
leads to the following theorem.

\begin{theorem}[Reduction of Artifacts for the circular mean
transform]\label{thm:reduction:circle} Let $\Kvp$ be defined by
\eqref{def:K} and let $P$ be a pseudodifferential operator on
$\Ec'(\Xi)$.  Finally, let \[\Lvp = \Mst P \Kvp \Mc, \] then $\Lvp$ is
a standard smooth pseudodifferential operator.  So, if $f\in \Ec'(D)$,
then \bel{reduced WF} \WF(\Lvp(f))\subset \WF(f)\cap
\Vab.\ee\end{theorem}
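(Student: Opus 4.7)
\emph{Plan.} The strategy is to exploit the smoothness of $\vp$ to show that $\Lvp$ is a composition of FIOs whose combined canonical relation, on $T^*(D)$, reduces to the diagonal. This simultaneously yields the pseudodifferential claim and the wavefront inclusion.

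I would begin by noting that $\Kvp$ is multiplication by the smooth compactly supported function $\vp(\phi)$ on $\Xi$, hence a classical pseudodifferential operator of order zero with symbol $\vp(\phi)$. Composing with $\Mc$ leaves the canonical relation $C$ of $\Mc$ unchanged and merely multiplies the amplitude by $\vp(\phi)$, producing a smooth amplitude supported in $\{\phi\in\supp(\vp)\}\subset(a,b)$. Since $P$ is pseudodifferential, $P\Kvp\Mc$ is again an FIO associated to $C$ with the same $\phi$-support in its amplitude. By Theorem \ref{thm:FIOadjoint}, $\Mst$ is an FIO associated to $C^t$, so the composition $\Mst(P\Kvp\Mc)$ is an FIO associated to $C^t\circ C$, provided one verifies that the composition is clean.

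The key step is to show that, when restricted to $T^*(D)\times T^*(D)$, the set $C^t\circ C$ collapses onto the diagonal. A direct computation with the explicit form of $C$ (derived from the defining function $|x-\th(\phi)|=r$ of the circle) reveals that $C^t\circ C$ consists of two pieces: the diagonal $\Delta_{T^*(\rtwo)}$, and a ``mirror'' piece which sends a covector $(x,\xi\dx)$ to $(x',\xi'\dx)$, where $x'$ is the reflection of $x$ across the tangent line to $S^1$ at the common center $\th(\phi)$. The crucial geometric observation is that for any $x\in D$ and any $\phi\in[0,2\pi]$, a short calculation gives $|x'|>1$, so $x'\notin\overline{D}$. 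Hence the mirror component contributes nothing to $T^*(D)\times T^*(D)$, and on $\Ec'(D)$ the operator $\Lvp$ is an FIO associated to the diagonal, i.e., a pseudodifferential operator on $D$.

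The wavefront inclusion \eqref{reduced WF} then follows because the symbol of $\Lvp$ is supported exactly on those covectors $(x,\xi\dx)\in T^*(D)$ of the form $\xi=\alpha\n(\phi,x)$ with $\phi\in\supp(\vp)\subset(a,b)$ and $\alpha\neq 0$, a set contained in $\Vab$. Pseudodifferential operators are microlocal and cannot produce wavefront set outside the support of their symbol, so $\WF(\Lvp f)\subset\WF(f)\cap\Vab$. The main obstacle will be the geometric verification that the mirror component of $C^t\circ C$ lies outside $T^*(D)\times T^*(D)$, together with checking clean composition; once these are in hand, the symbol-support argument and the resulting wavefront bound are routine.
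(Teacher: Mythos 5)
Your proof is correct, and its core — realizing $\Lvp$ as an FIO associated to $C^t\circ C$ and showing that this relation collapses to the diagonal over $D$ — is the same mechanism as the paper's: the maps $c_j$ and $\cinv$ in the paper, with $\cinv\circ c_j=\id$ on $T^*(D)\smo$, encode exactly your observation that of the two points on $C(\phi,r)$ carrying the given conormal data, the ``mirror'' point (the reflection across the tangent line to $S^1$ at $\th(\phi)$) satisfies $\norm{x'}^2=1+2rs+r^2>1$ with $s=\sqrt{1-(\nu/\alpha)^2}$, hence lies outside $\overline{D}$ and never enters $T^*(D)\times T^*(D)$. Where you genuinely diverge is in deriving \eqref{reduced WF}. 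You argue through the symbolic calculus: the amplitude of $P\Kvp\Mc$ is essentially supported in $\phi\in\supp(\vp)$, so after the stationary-phase composition with $\Mst$ the total symbol of $\Lvp$ is rapidly decreasing off $\Vc_{\supp(\vp)}\subset\Vab$, and microlocality with respect to the essential support gives the inclusion. The paper avoids computing any symbol: if $\xoxio\notin\Vab$ then neither $\phi_1(\xo,\xio)$ nor $\phi_2(\xo,\xio)$ lies in $[a,b]$, so $\Kvp\Mc f$ vanishes identically (hence is smooth) near those angles, pseudolocality of $P$ preserves this, and by the explicit form of $\cinv$ only covectors above those two points could back-project to $\xoxio$. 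Your route buys a cleaner conceptual statement (the operator is microlocally smoothing off $\Vc_{\supp(\vp)}$) and would also let you read off ellipticity from the symbol; the paper's route is more elementary, needing only the H\"ormander--Sato lemma and pseudolocality rather than the full composition calculus. Two small points to tighten in your version: phrase the support claim in terms of the \emph{essential support} of the total symbol (rapid decay, not literal vanishing, off $\Vc_{\supp(\vp)}$), and note that composing with a general pseudodifferential $P$ preserves the essential $\phi$-support of the amplitude only modulo smoothing terms.
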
 

Thus, only visible singularities of $f$ are visible in $\Lvp(f)$, and
there are no added singularities.

The proofs of these theorems are in the appendix.

\begin{figure}
	\centering
	\includegraphics[width=4cm]{circ.png}\hskip1ex 
	\includegraphics[width=4cm]{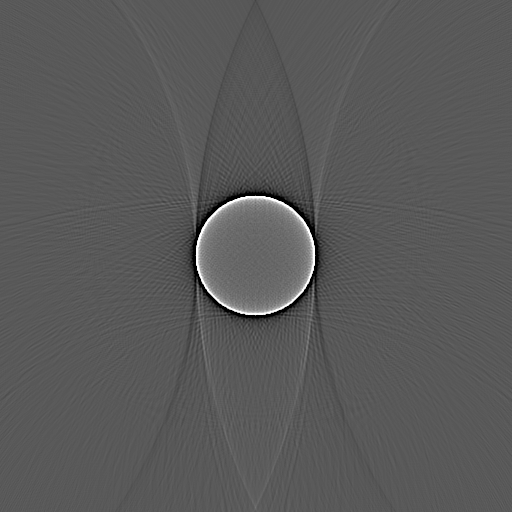}
	\raisebox{-0.4pt}{
	\begin{tikzpicture}[scale=0.6667]
		\draw [thick](-5cm,0) circle (0.75cm);
			
		\draw [thin, gray] (-8,-3) -- (-2,-3) -- (-2,3) -- (-8,3) -- (-8,-3);
			
		\draw [dashed] (-7,3) arc (45:-45:4.25cm);
		\draw [dashed] (-5.1,3) arc (31.5:-31.5:5.75cm);
		\draw [dashed] (-3,3) arc (135:225:4.25cm);
		\draw [dashed] (-4.9,3) arc (148.5:211.5:5.75cm);
	\end{tikzpicture}
	}
	\caption{Lambda type reconstruction (middle) of the characteristic function of a circle (left) for the angular range $[0,\pi]$ and visualization of the set of added artifacts $\Ac_{\sparen{0,\pi}}$ (right). The correspondence between practical reconstruction and theoretical description \eqref{def:Aab} is remarkable.}
	\label{fig:illustration of artifacts}
\end{figure}

 \subsection{Sonar (spherical mean transform with centers on a
 plane)}\label{sect:sonar}

In this section, we analyze a limited data problem for sonar.  We
assume the sound speed is constant and there are not multiple
reflections.  Then, using the ansatz of Cohen and Bleistein
\cite{CB1979} the data can be reduced to integrals over spheres
centered on the ocean surface of the perturbation, $f(x)$, from the
constant sound speed.  We assume the ocean surface is planar and
consider sonar data from transceivers on a compact subset, $K$, of
that plane.

The functions we consider will be compactly supported in the open
upper half plane \[X:=\sparen{(x_1,x_2,x_3)\in \rthree\st x_3>0}.\]
Let $\Xis:= \rtwo\times (0,\infty),$ and denote the sphere centered at
$(y,0)$ and of radius $r$ by \[S(y,r)=\sparen{x\in \rthree\st
\norm{x-(y,0)}=r}.\]  The spherical mean transform of $f\in \Dc(X)$ is
denoted \bel{def:Ms}\Ms f(y,r) = \frac{1}{4\pi r^2}\int_{x\in
S(y,r)}f(x)\d x,\ee where $\d x$ is the surface measure on the sphere
$S(y,r)$.  For $g\in \Ec'(\Xis)$, the dual transform is given by 
\[\Msst g(x) = \int_{y\in \rtwo} g(y,\norm{x-(y,0)}) \frac{1}{4\pi
\norm{x-(y,0)}^2}\d y,\] where we note that the weight does not blow
up since the integral is evaluated at $x\in X$.

Let $K$ be a compact subset of $\rtwo$ with nontrivial interior.  We
consider the limited data problem where data are given over spheres
$S(y,r)$ with $(y,r)$ in \[A=K\times (0,\infty).\] The resulting
transform is \[\Msa = \chi_A \Ms.\] Since $K$ is compact, for any
$f\in \Ec'(X)$, $\Msa(f)\in \Ec'(\Xis)$.

For $x\in X$ and $y\in \rtwo$, we let \bel{def:nyx}\nyx =
\frac{x-(y,0)}{\norm{x-(y,0)}}\ee denote the outward unit normal at
$x$ to the sphere $S(y,\norm{x-(y,0)})$.  Note that the third
coordinate of $\nyx$ is never equal to zero since $x_3>0$. 

We define the set \bel{def:Vsk} \Vsk := \sparen{(x,\xi\dx)\in
T^*(X)\st \exists y\in K,\exists \alpha \neq 0, \xi = \alpha\nyx}.\ee
In our next theorem, we show $\Vsk$ is the set of \emph{possible
visible singularities}, i.e., those that can be imaged by $\Msst P
\Msa$ (singularities above points $y\in \bd(K)$ might be cancelled).

If $x=(x_1,x_2,x_3)\in \rthree$ then we define $x'=(x_1,x_2)$. 

Singularities are spread in more subtle ways in sonar than in TAT and
so we need to introduce more notation to properly describe these added
artifacts.  Let $S^2_+$ denote the open top hemisphere of $S^2$.  Let
$(y,r)\in \Xis$ and let $\eta\in \rtwo\smo$ and $\n\in S^2_+$.  Let
$C(y,r,\eta,\n)$ be the circle on $S(y,r)$ \bel{def:C}C(y,r,\eta,\n )
= \sparen{x\in S(y,r)\st \exists a\in \rr,\ x'-y=a\eta +r{\n'}}.\ee
This circle is the intersection of $S(y,r)$ with the vertical plane
that is parallel to $\eta$ and goes through $\paren{y+r\n',0}$. 

Let $K\subset\rtwo$ be a compact set bounded by a piecewise
$C^\infty$, simple, closed curve, $B=\bd(K)$.  Then, the singular
support of $\chi_K$ is $B$.  As noted in Example \ref{ex:char}, at
points $y\in B$ at which $B$ is a smooth curve, $(y,\eta\dy)\in
\WF(\chi_K)$ if and only if $\eta$ is normal to $B$ at $y$.  On the
other hand, if $B$ has a corner at $y$ then all covectors above $y$
are in $\WF(\chi_K)$. By \cite{Petersen}, $\WF(\chi_K)$ is a set
of covectors above $\bd(K)$, and at any point $y\in \bd(K)$ at which
$\bd(K)$ is smooth, they are the conormal covectors to $\bd(K)$ at
$y$ (see the discussion in Example \ref{ex:char}).  

The following definition allows us to apply our next theorem to more
general sets $K$ than those with piecewise smooth boundaries.

\begin{definition}[Generalized Normal Bundle] \label{def:generalized
normal bundle} Let $K$ be a compact subset of $\rtwo$.  Define
\emph{the generalized normal bundle of $K$ to be the set, $N(K)$, of
vectors in $\rtwo\times \paren{\rtwo\smo}$ corresponding to  covectors in the
wavefront set of $\chi_K$: }
\[N(K) = \sparen{(y,\eta)\in \rtwo\times \paren{\rtwo\smo}\st (y,\eta\dy)\in \WF(\chi_K)}.\]
\end{definition}

Using this notation and with $A=K\times (0,\infty)$, the wavefront set
of $\chia$ as a function on $\Xi_\Sc$ is \[\WF(\chia) =
\sparen{((y,r),\eta\dy)\st (y,\eta)\in N(K), r>0}.\] 

If $K$ is bounded by a smooth closed curve then our definition of
$N(K)$ corresponds with the standard definition of the normal bundle
of $\bd(K)$. However, if $K$ itself is a curve, then $\chi_K=0$ as a
distribution.  This means that, by our definition $N(K)=\emptyset$,
and this definition does not exactly correspond to the standard normal
bundle of $\bd(K)=K$.

Now, we introduce the set of added singularities. Let $f\in \Ec'(X)$
and let $K$ a compact subset of $\rtwo$.  Define the set
\bel{def:Ask}\begin{aligned} \Ask(f):=&\bigcup
\Big\{\big\{(x,\alpha\n(y,x)\dx) \st x\in
C(y,r,\eta,\n(y,\tx))\big\}\\&\qquad \st (y,\eta)\in N(K),\ r>0,\
\alpha\neq 0,\ \tx\in S(y,r)\ \text{ and
}\paren{\tx,\alpha\n(y,\tx)\dx}\in WF(f)\Big\}.\end{aligned}\ee

We will say more about this set
after the theorem. 

\begin{theorem}[Visible and Added Singularities for the Spherical
transform]\label{thm:visible and added sing sphere} Let $f\in \Ec'(X)$
and let $P$ be a properly supported pseudo\-differential operator on
$\Dc'(\Xi_{\Sc})$.  Let $K$ be a compact subset of $\rtwo$ with
nontrivial interior and let and $A=K\times (0,\infty)$.  Then,
\bel{spherical mean singularities} \WF(\Msst P \Msa f)\subset
\paren{\WF(f)\cap \Vs{K}}\cup \As{K}(f).\ee If $P$ is elliptic on
$\range(\Ms)$, then,
\bel{elliptic spherical}
\WF(f)\cap\Vs{\intt(K)}\subset
\WF(\Msst P \Msa f)\,.\ee
\end{theorem}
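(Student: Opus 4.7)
The plan is to execute the paradigm of Section~\ref{sect:general strategy} with $\Mc=\Ms$ and $A=K\times(0,\infty)$ for the first containment, and to exploit the microlocal ellipticity of $\Msst P \Ms$ via localization for the second. I would first write $\Ms$ as an FIO with the one-dimensional phase $\phi(y,r,x,\sigma)=\sigma(\norm{x-(y,0)}-r)$; a direct computation on the auxiliary manifold $\sparen{\partial_\sigma\phi = 0}$ yields the canonical relation
\[
C = \sparen{((y,r),\,\alpha\,\n(y,x)'\dy + \alpha\,\dr;\; x,\,\alpha\,\n(y,x)\dx): x\in S(y,r),\,\alpha\neq 0},
\]
where $\n(y,x)'\in\rtwo$ is the horizontal projection of $\n(y,x)$. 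The structural observation that every covector in the image of the left projection $\pi_L$ of $C$ has nonzero $\dr$-coefficient (equal to $\alpha$), whereas $\WF(\chi_A)$ consists of pure $\dy$-covectors, immediately verifies the non-cancellation condition \eqref{non-cancellation} and makes Lemma~\ref{lemma:composition identity} applicable.

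For \eqref{spherical mean singularities}, I would compute $C^t\circ\Qc(A,C\circ\WF(f))$ by case analysis on the $C\circ\WF(f)$-summand $\xi$ and the $\WF(\chi_A)$-summand $\nu$. When $\nu=0$, injectivity of $\pi_L$ over the upper half space (a consequence of $x_3>0$, which forces a unique $y\in\rtwo$ on the line through $x$ in direction $\xi$) recovers $\WF(f)\cap\Vs{K}$. When $\xi=0$ and $\nu\neq 0$, the pure $\dy$-covector $\nu$ is not in the image of $\pi_L$, so $C^t\circ\sparen{((y,r),\nu)}=\emptyset$. When both are nonzero, I would solve the constraint $\alpha\,\n(y,\tx)' + \eta = \alpha\,\n(y,x)'$ with $x\in S(y,r)$, obtaining $x'-y = a\eta + r\,\n(y,\tx)'$ with $a=r/\alpha$; this is exactly the defining relation for the circle $C(y,r,\eta,\n(y,\tx))$, and the accompanying covector is $\alpha\,\n(y,x)\dx$. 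These covectors lie in $\As{K}(f)$, completing the first inclusion via Lemma~\ref{lemma:composition identity}.

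For \eqref{elliptic spherical}, I would fix $(x_0,\xi_0\dx)\in\WF(f)\cap\Vs{\intt(K)}$, pick $y_0\in\intt(K)$ and $\alpha_0\neq 0$ with $\xi_0=\alpha_0\,\n(y_0,x_0)$, set $r_0=\norm{x_0-(y_0,0)}$, and choose $\psi\in\Dc(\Xis)$ that is $\equiv 1$ near $(y_0,r_0)$ and supported in $\intt(K)\times(0,\infty)$. I would then compare $\Msst P \Msa f$ with $\Msst P \psi \Ms f$. Since $\chi_A-\psi$ vanishes in a neighborhood of $(y_0,r_0)$, the distribution $(\chi_A-\psi)\Ms f$ is smooth there; pseudolocality of $P$ preserves this, and since the unique $\pi_L$-preimage of $(x_0,\xi_0\dx)$, namely $((y_0,r_0),\alpha_0\,\n(y_0,x_0)'\dy+\alpha_0\dr)$, sits over $(y_0,r_0)$, Theorem~\ref{thm:HS} forces $(x_0,\xi_0\dx)\notin\WF(\Msst P(\chi_A-\psi)\Ms f)$. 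On the other hand, $\Msst P \psi \Ms$ is a pseudodifferential operator under the Bolker/injectivity property of $\pi_L$, and $\psi(y_0,r_0)=1$ together with the assumption that $P$ is elliptic on $\range(\Ms)$ makes this operator elliptic at $(x_0,\xi_0\dx)$; ellipticity of pseudodifferential operators preserves wavefront, so $(x_0,\xi_0\dx)\in\WF(f)\Rightarrow(x_0,\xi_0\dx)\in\WF(\Msst P \psi \Ms f)$, hence $\in\WF(\Msst P \Msa f)$.

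The main obstacle is the artifact case: one must verify that the shift $\eta/\alpha$ added to $\n(y,\tx)'$ produces a projection of norm $<1$ (so $\n(y,x)\in S^2_+$, equivalently $x_3>0$), and then observe that as $\alpha$ scales through the conic $\WF(f)$-fibre the resulting $x$ traces out (a subset of) the circle $C(y,r,\eta,\n(y,\tx))$; this is where the specific geometric set $\As{K}(f)$ emerges from the purely symbolic composition. A secondary care point for \eqref{elliptic spherical} is the Bolker/clean composition hypothesis needed to turn $\Msst P \psi \Ms$ into an honest pseudodifferential operator, and the fact that the strict condition $y_0\in\intt(K)$ (as opposed to $y_0\in\bd(K)$) is essential so that $\chi_A-\psi$ can be arranged to vanish on a full two-sided neighborhood of $(y_0,r_0)$.
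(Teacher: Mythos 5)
Your proposal is correct and follows essentially the same route as the paper: the same canonical relation for $\Ms$, the same verification of the non-cancellation condition via the $\dr$-component, the same three-way decomposition of $\Qc(A,\Cs\circ\WF(f))$ composed with $\Cst$ to obtain $\paren{\WF(f)\cap\Vsk}\cup\As{K}(f)$, and for \eqref{elliptic spherical} the same comparison of $\Msst P\Msa f$ with $\Msst P\psi\Ms f$ using a cutoff equal to one near $(y_0,r_0)$ (the paper routes the ellipticity of $\Msst P\psi\Ms$ through Theorem \ref{thm:reduction:sphere}, which you re-derive inline via the Bolker condition). One very minor point: the full artifact circle arises because $\WF(\chi_A)$ contains $\pm\eta$ (as $\chi_A$ is real-valued), not from scaling $\alpha$ through the conic fibre of $\WF(f)$, but since you only claim a subset of the circle this does not affect the containment.
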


\begin{remark}\label{remark:wild singularities}
In practice, $K$ will be a compact set bounded by a simple piecewise
smooth curve, and we now consider $\Ask(f)$ in this case.  By the
definition of $\Ask(f)$ and Theorem \ref{thm:visible and added sing
sphere}, singularities are added on spheres $S(y,r)$.  For added
singularities to appear on $S(y,r)$ the following must be satisfied:
\begin{itemize}
\item $y\in \bd(K)$

\item There is an $\tx\in S(y,r)$ and $\alpha \neq 0$ with
$(\tx,\alpha\n(y,\tx))\in \WF(f)$.

\end{itemize}
Once this is true, the singularities spread differently depending on
the geometry of $\bd(K)$.  Let $y\in \bd(K)$, $r>0$, and $\alpha\neq
0$.  Assume $\tx\in S(y,r)$ with $(\tx,\alpha\n(y,\tx)\dx)\in \WF(f)$.

First, assume $\bd(K)$ is a smooth curve at $y$.  Let $\eta$ be a
normal to $\bd(K)$ at $y$, then all normals to $\bd(K)$ at $y$ are
parallel to $\eta$.  Thus, the added singularities caused by the
singularity of $f$ at $(\tx,\alpha\n(y,\tx)\dx)$ will be on the
semicircle $C(y,r,\eta,\alpha\n(y,\tx))\cap X$ but nowhere else on
$S(y,r)$.

Now assume that $\bd(K)$ has a corner at $y$.  Then, for all $\eta\neq
0$,$(y,\eta)\in N(K)$.  Each such $\eta$ generates a semicircle of
possible added singularities that is in a plane parallel to $\eta$ and
through the fixed point $(y+r\n'(y,\tx),0)$.  As $\eta$ changes, this
semicircle sweeps out the entire hemisphere $S(y,r)\cap X$. Thus, in
this case, added singularities are on this entire hemisphere.

This discussion justifies using measurement sets $K$ with smooth
boundaries so added singularities do not spread along entire
hemispheres.
\end{remark}

These theorems do not address what happens to ``boundary
singularities'' of $f$, namely those for covectors $(x,\xi\dx)\in
\Vc_{\bd(K)}$.  In general, these singularities can be invisible or
visible.

The reconstruction from simulated sonar data in Figure
\ref{fig:QRS2011}, which was taken from \cite{QRS2011}, illus\-trate
our theorem perfectly; there are added singularities in exactly the places
predicted by the theorem.  In fact, all of the reconstructions in
\cite{QRS2011} have the added artifacts in the locations predicted by
the theorem.
\begin{figure}[ht]
\centerline{
\includegraphics[width=5cm]{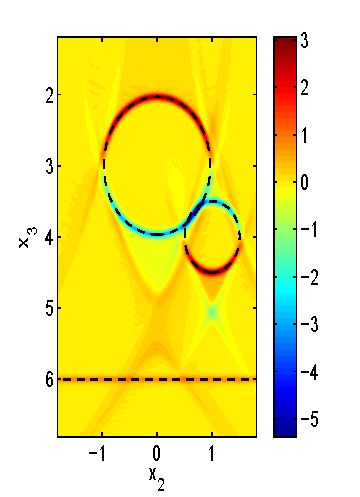}}
\caption{Reconstruction from spherical mean data of the characteristic
functions of two balls and a horizontal ocean floor.  The center set
$K=[-12,12]^2$ and the reconstruction is in the plane $x_1=0.25$. The
spheres are centered at $(0,0,3)$ and $(0,1,4)$. The added artifacts
in this picture are along circles parallel the $x_2\,x_3$ plane (since
they are caused by normals perpendicular to the $x_1$ axis for spheres
centered on lines parallel the $x_1$ axis).  Note that the predicted
artifact caused by the ocean floor would be outside the reconstruction
region since it would be caused by spheres centered at $(0.25,\pm
12,0)$ and of radius 6.  Reprinted from \cite{QRS2011} by permission
of IOP Publishing.} \label{fig:QRS2011}
\end{figure}

The same artifact reduction procedure suggested in
\cite{FrikelQuinto2013} is valid for this case, too.  Let $K'$ be an
open set whose closure is contained in $\intt(K)$ and let
$\vp:\rtwo\to\rr$ be a smooth cutoff function that is supported in
$\intt(K)$ and equal to one on $K'$. In this case, the operator
$\Kvp:\Dc'(\Xi_{\Sc})\to \Dc'(\Xi_{\Sc})$ is
\[\Kvp g = \vp g.\] Then $\Kvp \Ms$ uses only data for
$y\in K$ but it provides a smooth cutoff.  This discussion leads to
the following theorem.

\begin{theorem}[Reduction of Artifacts for the sonar
transform]\label{thm:reduction:sphere} Let $K$ be a compact subset of
$\rtwo$ with nontrivial interior.  Let $P$ be a properly supported
pseudodifferential operator on $\Ec'(\Xi_{\Sc})$.  Let $K'$ be an open
set whose closure is contained in $\intt(K)$ and let $\vp:\rtwo\to\rr$
be a smooth function supported in $\intt(K)$ and equal to one on $K'$.
Let \[\Lvp = \Msst P \Kvp \Ms, \] then $\Lvp$ is a standard $C^\infty$
pseudodifferential operator and so 
\[\WF(\Lvp(f))\subset \WF(f)\cap \Vsk\] for $f\in \Ec'(X)$.  If $P$ is
elliptic on $\range(\Ms)$, then $\Lvp$ is
elliptic on $\Vs{K'}$, so, if $f\in \Ec(D)$, then \bel{reduced WF}
\WF(f)\cap \Vs{K'}\subset\WF(\Lvp(f))\subset \WF(f)\cap \Vsk.
\ee\end{theorem}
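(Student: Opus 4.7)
The plan is to show that $\Lvp = \Msst P \Kvp \Ms$ is a classical pseudodifferential operator on $X$ by absorbing the smooth cutoff $\Kvp$ into the amplitude of $\Ms$ and applying the clean composition calculus for Fourier integral operators; the wavefront set and ellipticity conclusions will then follow from the standard pseudodifferential calculus. First I would observe that, because $\vp$ is smooth and compactly supported in $\intt(K)$, the multiplication operator $\Kvp$ is a pseudodifferential operator of order zero on $\Xi_{\Sc}$ with principal symbol $\vp(y)$. Consequently $P\Kvp$ is a pseudodifferential operator, and $\Lvp$ is the composition $\Msst\circ (P\Kvp)\circ \Ms$ of three FIOs (associated respectively with $C^t$, the diagonal in $T^*(\Xi_{\Sc})$, and $C$), with $\Kvp$ ensuring that the composition is properly supported in the $y$ variable.

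The key second step is to verify that the canonical relation $C$ of $\Ms$ satisfies the Bolker (injective left-projection) condition over $T^*(X)$. Indeed, a covector $(x,\xi\dx)$ with $x_3>0$ determines a unique $y\in\rtwo$ for which $\xi$ is proportional to $\nyx$, since the line through $x$ in the direction of $\xi$ meets the plane $\sparen{x_3=0}$ in exactly one point. Hence Guillemin's theorem on the composition of FIOs applies and $\Lvp$ is a pseudodifferential operator on $X$. Computing its principal symbol shows that at a covector $\paren{x,\alpha\nyx\dx}$ with $r=\norm{x-(y,0)}$ it is proportional to $\vp(y)^2$ times the principal symbol of $P$ at the data covector corresponding to $(y,r)$, times nonvanishing geometric factors produced by the FIO composition. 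Thus the symbol of $\Lvp$ is identically zero off $\Vs{\supp\vp}$, and on $\Vs{K'}$ (where $\vp\equiv 1$) it is nonzero whenever $P$ is elliptic on $\range(\Ms)$.

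Finally, since $\Lvp$ is pseudodifferential it does not enlarge wavefront sets, and the symbolic support produces $\WF(\Lvp f)\subset \WF(f)\cap \Vs{\supp\vp}\subset \WF(f)\cap \Vsk$, proving the upper inclusion. Under the ellipticity hypothesis on $P$, ellipticity of $\Lvp$ on $\Vs{K'}$ together with elliptic regularity yields the matching lower inclusion $\WF(f)\cap \Vs{K'}\subset \WF(\Lvp f)$. I expect the main obstacle to be the careful verification of the Bolker/clean-composition condition for the sonar canonical relation over the upper half-space and the accompanying principal-symbol computation; these are the same geometric facts that underlie Theorem \ref{thm:visible and added sing sphere}, so I would reuse that analysis rather than re-deriving it from scratch.
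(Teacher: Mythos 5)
Your proposal is correct and follows essentially the same route as the paper: $\Kvp$ is a (trivial) pseudodifferential operator so $P\Kvp\Ms$ is an FIO with canonical relation $\Cs$, the relation is a canonical graph (hence $\Cst\circ\Cs\subset\Delta$ and $\Lvp$ is a standard pseudodifferential operator), and the lower inclusion comes from ellipticity of the factors on $\Vs{K'}$ --- the paper phrases that last step as a wavefront-propagation argument through the maps $c$ and $c^{-1}$ rather than a principal-symbol computation, but the content is the same. Two harmless slips: the symbol of $\Lvp$ carries one factor of $\vp(y)$, not $\vp(y)^2$ (the cutoff appears only once in $\Msst P\Kvp\Ms$), and the Bolker condition you need is injectivity of the projection $\Pi_L:\Cs\to T^*(\Xis)$ (a data covector determines a unique $(x,\xi\dx)$, i.e., $c^{-1}$ is a function), whereas you justified the uniqueness of $y$ given $(x,\xi\dx)$; both hold here because $c$ is a diffeomorphism, as the analysis you propose to reuse establishes.
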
 

Thus, only visible singularities of $f$ are visible in $\Lvp(f)$, and
there are no added singularities.  In contrast to the circular mean
case, when $P$ is elliptic, all of the singularities of $f$ in
$\Vs{K'}$ are visible in $\Lvp( f)$.  This difference will be
explained in Remark \ref{remark:elliptic}.

\section{Numerical Examples}\label{sect:numerics}

In this section we illustrate the capability of our artifact reduction
strategy for limited view reconstruction from circular mean data. In
particular, we show that the proposed artifact reduction strategy
performs very well on synthetic data as well as on real photoacoustic
data. In our experiments we consider the circular mean transform, $\Mc$, with detectors on a circle or
circular arc surrounding the object.  This is the standard model in
sectional imaging in photo- and thermoacoustic tomography
\cite{Elbau:2012df,razansky2009multispectral}, and it is the model we analyzed in Section
\ref{sect:TAT}.
The Sonar case, as presented in Section \ref{sect:sonar}, can be implemented similarly
and we expect similar results. 

For the numerical experiments, we implemented the following
reconstruction operators in Matlab,
\begin{equation}
\label{eq:rec op aux}
	\Lc^1 = \Mst \paren{\frac{\mathrm{d}}{\mathrm{d}r}}\Mc,\quad \Lc^2 = \Mst \paren{-\frac{\mathrm{d}^2}{\mathrm{d}r^2}}\Mc.
\end{equation}
We also implemented their artifact reduced versions 
\begin{equation}
\label{eq:rec op mod aux}
	\Lvp^1 = \Mst \paren{\frac{\mathrm{d}}{\mathrm{d}r}}\Kvp\Mc,\quad \Lvp^2 = \Mst \paren{-\frac{\mathrm{d}^2}{\mathrm{d}r^2}}\Kvp\Mc,
\end{equation}
where $\Kvp$ is the multiplication operator defined in \eqref{def:K}.
$\Kvp$ multiplies the limited view data $g=\Mc_{[a,b]} f$ with a
smooth truncation function $\varphi$ that satisfies the assumptions of
Theorem \ref{thm:reduction:circle}. In our implementation of $\Kvp$ we
use a similar cutoff function as in \cite{FrikelQuinto2013}. Namely,
we choose $\eps\in \paren{0,(b-a)/2}$ let
$\vp=\vp_\epsilon:[0,2\pi]\to\rr$ be supported in $[a,b]\subset [0,
2\pi]$ so that $\vp(\phi)=1$ for $\phi\in[a+\eps,b-\eps]\subset[a,b]$.
In the transition regions, $[a,a+\eps]$ and $[b-\eps,b]$, we used the
function $\nu_\eps$ defined by
$\nu_\eps(x)=\exp(\frac{x^2}{x^2-\epsilon^2})$ for $\abs{x}<\eps$ and
$\nu_\eps(x)=0$ for $\abs{x}\geq \eps$, to generate a smooth
transition from 0 to 1 and from 1 to 0, respectively. That is, for
$\phi\in[a,a+\eps]$ we set $\vp_\eps(\phi)=\nu_\eps(a+\eps-\phi)$, and
for $\phi\in[b-\eps,b]$ we set $\vp_\eps(\phi)=\nu_\eps(\phi-b+\eps)$.
The function $\vp_\eps$ defines a smooth function apart from
$\phi=a+\eps$ and $\phi=b-\eps$. Even though this function is not
globally smooth, we may use it for artifact reduction because, in
practice, it is evaluated at a finite number of points and there is a
smooth function that has these values at these points. Moreover, this
function has shown to provide good artifact reduction results in
limited angle x-ray tomography, cf. \cite{FrikelQuinto2013}. 
	
In our experiments, we performed two sets of reconstructions.  In our first experiment, we computed reconstructions from synthetic data. Here, we generated limited view spherical mean data $g=\Mc_{[a,b]} f$ in Matlab  from a characteristic function of a circle centered around the origin and from the Forbild head phantom \cite{Forbild_head_phantom}. The corresponding Lambda type reconstructions $\Lc^2 g$ and $\Lvp^2 g$ are shown in Figures \ref{fig:pat circ rec} and \ref{fig:pat phantom rec}. The standard reconstructions $\Lc^2 g$ without artifact reduction (left images) clearly show the circular artifacts as characterized in Section \ref{sect:TAT}, see also Figure \ref{fig:illustration of artifacts}. In particular, in the $\Lc^2 g$ reconstruction of Figure \ref{fig:pat phantom rec}, we observe that many artifacts overlap and significantly degrade the reconstruction quality (even generating new image features). This is due to the presence of many singularities in the original image. Thus, similar behavior can be expected for any limited view reconstruction of an image with many singularities. By using artifact reduced reconstruction operators $\Lvp^2 g$, in Figures \ref{fig:pat circ rec} and \ref{fig:pat phantom rec}, we can clearly observe an improvement of image quality. The artifacts are clearly reduced while most of the visible singularities are preserved (even this could not be proven in Section \ref{sect:TAT}, see Remark \ref{remark:elliptic}). However, we also observe that some of the visible singularities with directions at the boundary of the limited view are smoothed.

\begin{figure}[t]
\centering
	\includegraphics[width=4cm]{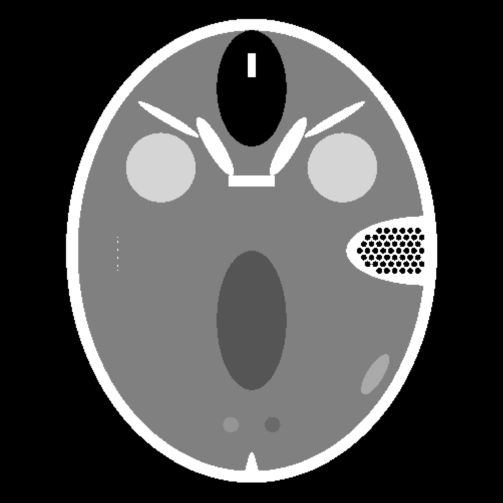}\hskip2ex 
	\includegraphics[width=4cm]{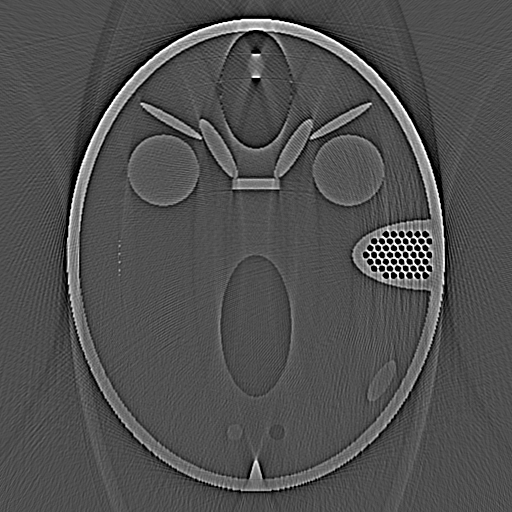}\hskip2ex 
	\includegraphics[width=4cm]{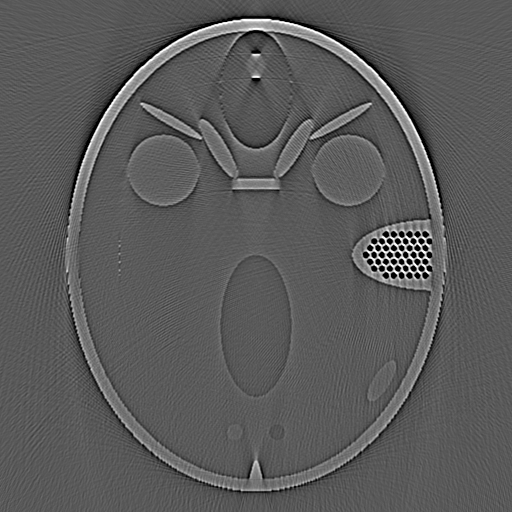} 
	\caption{Lambda type reconstruction ($512\times 512$) of the FORBILD head phantom \cite{Forbild_head_phantom} for the limited angular range $[0^\circ,180^\circ]$ ($750$ projections, $725$ radii). The left-hand picture is the original phantom, the middle picture is the
reconstruction without artifact reduction and the right-hand picture is an artifact reduced reconstruction with $\eps=18^\circ$. 
}
\label{fig:pat phantom rec}
\end{figure}

\begin{figure}[t]
\centering
	\includegraphics[width=4cm,
	angle=180]{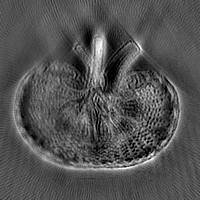}\hskip2ex  
	\includegraphics[width=4cm,
	angle=180]{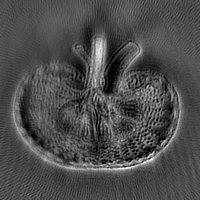}\hskip2ex
	\includegraphics[width=4cm,
	angle=180]{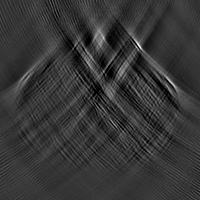}  
	\caption{Real data reconstruction ($200\times200$) of a paper
	phantom with ink as an optical absorber for the limited
	angular range $[-45^\circ,225^\circ]$ ($256$ projections,
	$2030$ radii).  
	\emph{Left:} $\Lc^1 g$ (no artifact reduction); \emph{Middle:} $\Lvp^1 g$ with $\eps=45^\circ$ (reconstruction with artifact reduction); \emph{Right:} $|\Lc^1 g-\Lvp^1 g|$ (difference image). 
In above reconstructions one can clearly observe the effect of artifact reduction. The difference image shows that the geometry of artifacts corresponds to the theoretical characterization of Section \ref{sect:TAT}.}
\label{fig:pat real data rec1}
\end{figure}
\begin{figure}[t]
\centering
	\includegraphics[width=4cm]{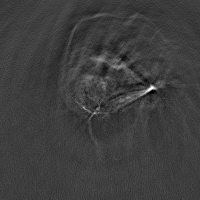}\hskip2ex 
	\includegraphics[width=4cm]{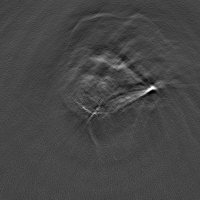}\hskip2ex 
	\includegraphics[width=4cm]{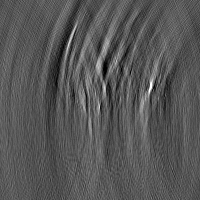}
\caption{Real data reconstruction ($200\times200$, $2193$ radii) of a
mouse tumor for the limited angular range $[0^\circ,180^\circ]$ (200 projections).  
\emph{Left:} $\Lc^1 g$ (no artifact reduction); \emph{Middle:} $\Lvp^1 g$ with $\eps=45^\circ$ (reconstruction with artifact reduction); \emph{Right:} $|\Lc^1 g-\Lvp^1 g|$ (difference image). Though in the left image the added artifacts are not clearly distinguishable from reliable image features one clearly observes an increase of image quality by using the reconstruction operator with artifact reduction. The difference image shows that artifacts are effectively removed and that the geometry of the artifacts is in accordance with the theoretical characterizations, see also Figure \ref{fig:illustration of artifacts}.}
\label{fig:pat real data rec2}
\end{figure}

In our second experiment, we computed reconstructions from real
photoacoustic measurements. More precisely, we computed the
backprojection of the experimental pressure data that were generated
through a focused illumination of the object in the plane $z=0$, and
measured by acoustic detectors that were distributed on a circular arc
surrounding the object. The data is by courtesy of the group of Prof.
Daniel Razansky (Institute for Biological and Medical Imaging,
Helmholtz Zentrum M\"unchen). For more details on the measurement
setup we refer to \cite{razansky2009multispectral}. Although real PAT
data is three dimensional in nature, the pressure data $p$ (measured
in the plane $z=0$) of the described sectional imaging setup is
related to the circular mean operator by
$p(\xi,t)=\frac{1}{2}\partial_t\Mc f(\xi,t)$, where $\Mc f(\xi,t)$
denote the  circular means of the imaged section, for more details
see \cite{Elbau:2012df}. Thus, our reconstructions correspond to the
application of the operators $\Lc^1$ and $\Lvp^1$ to the limited view
circular mean data $g=\Mc_{[a,b]} f$. 

The images of Figure \ref{fig:pat real data rec1} show reconstructions of a paper phantom
(which has ink as an optical absorber) and images in Figure
\ref{fig:pat real data rec2} show reconstructions of a mouse tumor. In
Figure \ref{fig:pat real data rec1}, one can clearly observe the
effect of artifact reduction induced by smooth truncation of the
limited view data at the ends of the angular range. As previously, the visible
singularities are well preserved while the circular artifacts are
removed as can be seen from the difference image in Figure
\ref{fig:pat real data rec1}. We would like to point our that even
though the ground truth is not available the theoretical analysis of
Section \ref{sect:characterization} together with previous examples
enable us to differentiate between reliable image features and added
artifacts (which are located on circular arcs). This is very important
since there are practical situations where it is not that easy to
distinguish between reliable image features and artifacts. For
example, Figure \ref{fig:pat real data rec2} shows another limited
view reconstruction from experimental photoacoustic data where the
artifacts are not as distinctive as in Figure \ref{fig:pat real data
rec1}. The reconstruction with artifact reduction is presented in the
middle image of Figure \ref{fig:pat real data rec2}. Here, one can
observe that a superior image quality is achieved through the application of artifact reduction: the reconstructions
appear to be more clear, having a more homogeneous background. As seen from the difference image in Figure
\ref{fig:pat real data rec2} the circular artifacts are removed
effectively.

Finally, we note that the visibility of artifacts in numerical
reconstructions highly depends on the choice of the $\eps$-parameter
in the cutoff function ($\vp_\eps$ above). For a detailed discussion
we refer to \cite{FrikelQuinto2013}. Here, we note only that there is
a trade-off between mitigation of artifacts and smoothing of visible
singularities: large $\eps$-parameters lead to good mitigation of
artifacts in the reconstruction while at the same time (depending on
the limited view) a large $\eps$-parameter can smooth the visible
singularities, as in Figures \ref{fig:pat circ rec} and \ref{fig:pat
phantom rec}. In contrast, small $\eps$-parameters preserve most of
the visible singularities while circular artifacts might still be
clearly visible. In all of the above experiments, we have chosen the
\enquote{optimal} $\eps$-parameter by visually inspecting a series of
reconstructions with different parameters.

To sum up, our theoretical analysis of Section \ref{sect:characterization} helps to distinguish between reliable image features and artifacts in limited view reconstructions (thus improving proper interpretation of limited view reconstructions). In particular, it provides an effective and easy-to-implement strategy for artifact reduction. Our experiments show that this strategy significantly improves the reconstruction quality not only in reconstructions from synthetic phantom data but also from real experimental data.

\section{Concluding Remarks}\label{sect:conclusions}

In this paper we presented a paradigm to precisely characterize
potential added singularities in limited data tomography problems.
The paradigm in Section \ref{sect:general strategy} is general and can
be applied to a range of other limited data problems besides limited
angle tomography, and the ones we studied in this article: PAT/TAT and
Sonar.  We proved that these added singularities come from data at the
boundary of the data set.  For PAT/TAT with data on $[a,b]\times
(0,2)$, if the object, $f$, has a singularity on a circle $C(a,r)$ or
$C(b,r)$ that is conormal to the circle, then the singularity can be
spread over the entire circle (see Remark \ref{remark:A and V}).  In
Sonar with limited data on $K\times (0,\infty)$, the added artifacts
appear on part or all of the spheres $S(y,r)$ for $y\in \bd(K)$.  If
$f$ has a singularity conormal to such a sphere, then singularities
can be spread along a circle on that sphere if $\bd(K)$ is smooth at
$y$ and along the entire sphere if $\bd(K)$ is not smooth at $y$ (see
Remark \ref{remark:wild singularities}).  

For the case of sonar we showed that detector locations (the set $A$)
with smooth boundaries will produce fewer added artifacts than sets
with corners. 

Moreover, we showed that, with a smooth cutoff at the boundary, the added
artifacts are eliminated.  Reconstructions from real and simulated
data were presented that illustrate our paradigm and the artifact
reduction procedure.

\section*{Acknowledgements}

The first author thanks Tufts University for its hospitality during
the during the spring semester 2014 when most of the presented work
was done. Moreover, he acknowledges support by the Helmholtz
Association within the young investigator group VH-NG-526.  The second
author was supported by NSF grant DMS1311558 as well as the generosity
of the Technische Universit\"at M\"unchen and the Helmholtz Zentrum,
M\"unchen. The second author thanks Jan Boman for many enlightening
discussions about microlocal analysis over the years as well a helpful
observation about wavefront of real-valued functions that is used in
the proof of Theorem \ref{thm:visible and added sing sphere}.
We thank Anuj Abhishek for his helpful comments on ideas in this
article. The authors thank Clifford Nolan for insightful discussions
about this research and seismic imaging.  They thank Frank Filbir and
Stefan Kunis for encouraging and supporting this collaboration.  The
authors thank Daniel Queiros and the group of Prof. Daniel Razansky
(both at the Institute for Biological and Medical Imaging, Helmholtz
Zentrum M\"unchen) for providing us experimental photoacoustic data.

Finally, the authors are indebted to the referees for their thoughtful
comments that improved the article.

\begin{appendix}

\section{Proofs of Theorems \ref{thm:visible and added
sing}, \ref{thm:reduction:circle}, \ref{thm:visible and added sing
sphere}, and  \ref{thm:reduction:sphere}}
\label{sect:appendix} 

 \begin{proof}[Proof of Theorem \ref{thm:visible and added sing}] We
prove the theorem by going through the paradigm discussed in section
\ref{sect:general strategy} using Lemma \ref{lemma:composition
identity}.  

 The canonical relation of $\Mc$ is given in equations (4.2)-(4.4) in
the proof of Lemma 4.3 on p.\ 396 of \cite{AQ1996a}.  \bel{canonical
relation circle}\begin{aligned} \Cm = \Big\{\big((\phi,r),&
\alpha\big[\th^\bot(\phi)\cdot\npx\dphi+\dr\big]; x,
\alpha\npx\dx\big)\st \\
	&\alpha\neq0, (\phi,r)\in \Xi, x\in
	C(\phi,r)\cap D\Big\}
\end{aligned}\ee where $\thperp(\phi) = \th(\phi+\pi/2)$ is
perpendicular to $\th(\phi)$.  

Recall that $A=[a,b]\times (0,\infty)$ in this proof.  The
non-cancellation condition \eqref{non-cancellation} in Theorem
\ref{thm:WF mult} holds since $\WF(\chi_A)$ has $\dr$ component of
zero, and any covector in $\Cm\circ \paren{T^*(D)\smo}$ has nonzero
$\dr$ component.  So, this theorem can be used to show that the
product $\chiab \Mc$ is well-defined for distributions $f\in \Ec'(D)$
and \[\WF(\chiab \Mc f) \subset \Qc(A,\Cm\circ\WF(f))\] (see
\eqref{composition-genl:Mf}).  We calculate
$\Qc(A,\WF(\Cm\circ\WF(f)))$ in steps.

Using \eqref{canonical relation circle} we see
\begin{multline}
	 \Cm\circ\WF(f) = \Big\{
\paren{(\phi,r),\alpha\big[\th^\bot(\phi)\cdot\npx\dphi+\dr\big]}\st
\\ (\phi,r)\in \Xi\,,\alpha\neq0,\ \exists x\in C(\phi,r),\
\paren{x,\alpha\npx\dx}\in\WF(f)\Big\}.
\end{multline}

According to the definition of $\Qc$, \eqref{def:Q}, \bel{eq:wavefront
set inclusion intersected sphere}
\begin{aligned} \Qc(A,\Cm\circ \WF(f))=& \big\{ ((\phi,r),
\xi+\eta)\st \phi\in [a,b],\ \bparen{((\phi,r), \xi)\in\Cm\circ
\WF(f)\textrm{ or }\xi=0}\,\\&\qquad \textrm{ and }\,\bparen{((\phi,r),
\eta)\in\WF(\chi_A)\textrm{ or }\eta=0}\big\}
\end{aligned}\ee 

 One can break the right-hand side of \eqref{eq:wavefront set
inclusion intersected sphere} into the union of three sets:
\begin{align}
\label{Qc(A,f)} \Qc(A,\Cm\circ \WF(f))&=
\bparen{\paren{\Cm\circ \WF(f)}\cap \sparen{(\phi,r,\eta)\in
T^*(\Xi)\st \phi\in [a,b]}}\\
&\quad \cup \WF(\chi_A) \cup \Wab(f) \end{align}
where 
the  set
\begin{align*} \Wab(f) = \Big\{ &\paren{(\phi,r), \bparen{\mu +
\alpha\th^\bot(\phi)\cdot\n(\phi,\tx)}\dphi + \alpha
\dr}
\st \\ &\quad\alpha,\mu\neq 0,  \phi\in\{a,b\},r\in (0,\infty)\ \exists \tx\in
C(\phi,r),\ 
\paren{\tx,\alpha\n(\phi,\tx)\dx}\in\WF(f)\Big\}.
\end{align*}  is generated by terms
 $\xi\neq 0$ and $\eta\neq 0$ in \eqref{eq:wavefront set inclusion
intersected sphere}.  Furthermore, $\Wab(f)$ can be written
\bel{def:Wab}\begin{aligned} \Wab(f) = \Big\{ \paren{(\phi,r),
\nu\dphi+\alpha\dr} \st& \phi\in\sparen{a,b},\ \nu\in \rr,
\alpha\neq0,\\&\exists \tx\in C(\phi,r),\
\paren{\tx,\alpha\n(\phi,\tx)\dx}\in\WF(f)\Big\}.
\end{aligned}\ee because
$\nu=\mu+\alpha\th^\bot(\phi)\cdot\n(\phi,\tx)$ is arbitrary since,
for $\phi\in \sparen{a,b}$, every nonzero covector
$((\phi,r),\mu\dphi)\in WF(\chi_A)$ (we allow $\mu=0$ in this
expression to make the proof easier to describe).  At this point, we
have finished steps \eqref{paradigm:start} to \eqref{paradigm:Mf} of the
paradigm of Section \ref{sect:general strategy}.

In the next step we compute $\Cmt \circ \Qc(A,\Cm\circ \WF(f))$ by
first finding explicit expressions for the compositions with $\Cm$ and
$\Cmt$. 

For $(x,\xi)\in D\times \paren{\rtwo\smo}$ define $\po$ as the unique
angle in $[0,2\pi]$ such that $\th(\po)$ is the intersection of $S^1$
with the ray $\sparen{x+t\xi\st t<0}$ and define $\pt$ as the unique
angle in $[0,2\pi]$ such that $\th(\pt)$ is the intersection of $S^1$
with the ray $\sparen{x+t\xi\st t>0}$.  Note that $\po$  and $\pt$ are
smooth functions of $(x,\xi\dx)$.   Define for $j=1,2$ 
\[\begin{gathered}\alpha_1(\xi) = \norm{\xi}\qquad 
\alpha_2(\xi) = -\norm{\xi},\qquad r_j(x,\xi) =
\norm{x-\th(\phi_j(x,\xi))},\\
c_j(x,\xi\dx) = \paren{(\phi_j(x,\xi),r_j(x,\xi)),
\alpha_j(x,\xi)\bparen{\n(\phi_j(x,\xi),x)\cdot
\thperp(\phi_j(x,\xi))\dphi + \dr}} \end{gathered}\] then
$\Cm\circ\sxxi = \sparen{c_j(x,\xi\dx)\st j=1,2}$.  

Let $\nu\in \rr,\alpha\neq 0$ and assume $\nu/\alpha\in [-1,1]$.
Define \bel{def:x}\begin{gathered}x((\phi,r),
\nu,\alpha)=\th(\phi)+r\paren{(\nu/\alpha) \thperp(\phi) -
\sqrt{1-(\nu/\alpha)^2}\th(\phi)}, \\
\xi((\phi,r),\nu,\alpha) =\alpha
\n(\phi,x((\phi,r),\nu,\alpha)).\end{gathered}\ee Note that
$x((\phi,r),\nu,\alpha)\in C(\phi,r)$ and if $\nu/\alpha$ and $r$ are
sufficiently close to zero, this point is in $D$.  Define
\[T_\Mc = \sparen{\paren{(\phi,r),\nu\d\phi+\alpha\dr}\st (\phi,r)\in
\Xi, \alpha\neq 0, \nu/\alpha \in
[-1,1],x((\phi,r),\nu,\alpha)\in D}.\] Then,
$c_j:T^*(D)\smo\to T_\Mc$ and $c_1(T^*(D)\smo)\cup
c_2(T^*(D)\smo)=T_\Mc$.  These statements are proven using geometry
and the observations that
$x_\phi=x((\phi,r),\nu,\alpha)$ satisfies
$(\n(\phi,x_\phi)\cdot\thperp(\phi) = \nu/\alpha$ and $x_\phi\in
C(\phi,r)$.

For $\nu/\alpha\in[-1,1]$ and $(\phi,r)\in \Xi$, whenever
$x((\phi,r),\nu,\alpha)\in D$,  define
\bel{cinv}\begin{gathered}\cinv((\phi,r),\nu,\alpha)=
\paren{x((\phi,r),\nu,\alpha),\xi((\phi,r),\nu,\alpha)\dx},\\
\text{then } \sparen{\cinv((\phi,r),\nu,\alpha)} =\Cmt\circ
\sparen{((\phi,r),\nu\dphi+\alpha\dr)}.\end{gathered}\ee These
calculations allow one to show for $j=1,2$ that $\cinv\circ c_j$ is
the identity map on $T^*(D)\smo$.  So, if $(x,\xi\dx)\in T^*(D)\smo$
then \bel{unique composition}\Cmt\circ
\Cm\sparen{(x,\xi\dx)}=\sparen{(x,\xi\dx)}.\ee

Now that these basic observations have been made, we can calculate the
wavefront set of $\Mst P\Mab(f)$.  Using the composition rules
\eqref{eq:distributive and associative relations for compositions},
Theorem \ref{thm:HS}, and \eqref{Qc(A,f)} yields the following union
of sets:
\begin{align}
\Cmt\circ\Qc(A,\Cm\circ WF(f))
&=\Cmt\circ\bparen{\paren{\Cm\circ\WF(f)} \cap
\sparen{(\phi,r,\eta)\in T^*(\Xi)\st \phi\in [a,b]}}\label{WF(f)}\\
&\qquad\qquad \cup \,\Cmt\circ \WF(\chi_A) \label{WFchi}\\ 
&\qquad \qquad\qquad\cup \Cmt\circ \Wab(f)\label{Wab}
\end{align}

We examine the three terms of \eqref{WF(f)}-\eqref{Wab} separately and
we first show that the set in equation \eqref{WF(f)} is equal to $\Vab
\cap \WF(f)$.  Let $(\xo,\xio\dx)$ be in the set in \eqref{WF(f)}.
Then $\xoxio\in\Cmt\circ\paren{\Cm\circ\WF(f)}= \WF(f)$ by
\eqref{unique composition}.  Because $\xoxio$ is also in the set \[
\Cmt\circ\sparen{(\phi,r,\eta)\in T^*(\Xi)\st \phi\in [a,b]},
\] either $\phi_1(\xo,\xio)$ or $\phi_2(\xo,\xio)$ 
(or both) must be in $[a,b]$.  Therefore, $\xio=\alpha\n(\phi,\xo)$
for some $\phi\in [a,b]$ and some $ \alpha\neq 0$.  This means that
$\xoxio\in \Vab$ and the set in \eqref{WF(f)} is contained in
$\Vab\cap\WF(f)$.  The reverse containment is proven in a similar way.

Next, we consider the set in \eqref{WFchi}.  As the $\dr$ component of
any covector in $\WF(\chi_A)$ is zero and covectors in $\Cmt$ all have
nonzero $\dr$ component, $\Cmt\circ\WF(\chi_A) = \emptyset$. 

Finally, we consider the set $\Cmt\circ \Wab(f)$. Let
$\paren{(\phi,r),\nu\dphi+\alpha\dr}\in \Wab(f)$.  Then $\phi\in
\sparen{a,b}$, $\nu\in \rr$ is arbitrary, $\alpha\neq 0$, and for some
$\tx\in C(\phi,r)$, $(\tx,\alpha\n(\phi,\tx)\dx)\in\WF(f)$.  For
$\Cmt\circ \paren{(\phi,r),\nu\dphi+\alpha\dr}$ to be nonempty,
$\nu/\alpha$ can be any value in $[-1,1]$ such that
$x_\phi=x((\phi,r),\nu,\alpha)\in D$.  Since $r$ is fixed, $x_\phi$ is
an arbitrary point on $C(\phi,r)\cap D$.  This means that for every
$x\in C(\phi,r)\cap D$, the covector $(x,\alpha\npx\dx)\in \Cmt\circ
\Wab(f)$. Therefore, the set of possible added singularities from
\eqref{Wab} is the set $\Aab(f)$ defined in \eqref{def:Aab}.

Finally, we use Lemma \ref{lemma:composition identity} and the  general
wavefront containment \eqref{composition-genl} to conclude
\eqref{circular mean singularities} in Theorem \ref{thm:visible and
added sing}.  This finishes the proof.
\end{proof}

\bigskip
\begin{proof}[Proof of  Theorem \ref{thm:reduction:circle}]
Because $\Kvp$ is a (trivial) pseudodifferential operator on
$\Dc'(\Xi)$, $\Kvp\Mc$ is a standard $C^\infty$ FIO with the same
canonical relation as $\Mc$.  Therefore, $\Lvp$ is a standard
pseudodifferential operator for distributions supported in $D$ because
\bel{Delta:circle}\Cmt\circ \Cm = \Delta\ee is the diagonal in
$T^*(D)$ (see \cite{Treves:1980vf}) by equation \eqref{unique
composition} .  So, $\WF(\Lvp f)\subset \WF(f)$ for any $f\in
\Ec'(D)$.

To show the inclusion in \eqref{reduced WF}, we note that if
$\xoxio\notin \Vab$, then neither of the two angles
$\po=\po(\xo,\xio)$ and $\pt=\pt(\xo,\xio)$ is in $[a,b]$.  Therefore
$\Kvp\Mc (f)$ is zero (and so smooth) near $\phi_j$ for $j=1,2$ and
for any $\eta$ above $(\phi_j,r)$.  That means that any covector above
the point $(\phi_j,\norm{\xo-\th(\phi_j)})$ ($j=1,2$) is not in
$\WF(P\Kvp\Mc(f))$. By \eqref{cinv}, the only two covectors that could
contribute to $\Cmt\circ \WF(P\Kvp\Mc(f))$ at $\xoxio$ are above these
two points.  Since $\WF(\Lvp(f))\subset \Cmt\circ \WF(P\Kvp\Mc(f))$,
$\xoxio\notin \WF(\Lvp(f))$, this proves the theorem.
\end{proof}

\bigskip
\begin{proof}[Proof of Theorem \ref{thm:visible and added sing
sphere}] We will recall some of the microlocal analysis of the sonar
transform that was proven in \cite{QRS2011}.
The canonical relation of $\Ms$ is \cite[equation (3.5)]{QRS2011} 
\bel{def:C:sphere}
\begin{aligned}\Cs =
\big\{\paren{(y,r),\alpha\bparen{\n'(y,x)\dy+\dr},x,\alpha\n(y,x)\dx}\st\phantom{x}
\\(y,r)\in \Xis,\ x\in S(y,r)\cap X,\ \alpha\neq
0\big\}.\end{aligned}\ee Because $x_3>0$, $\xi=\alpha\n(y,x)$ always
has $\xi_3\neq 0$.  Note that in equation (3.5) in \cite{QRS2011} the
$\dx$ coordinate should be $+\alpha\n(y,x)\dx$ since $C$ is the
canonical relation for $\Ms$, not its Lagrangian manifold.

Let $(x,\xi\dx)\in T^*(X)$ with $\xi_3\neq 0$.  We define
\bel{def:c}c(x,\xi\dx) =
\paren{y(x,\xi),r(x,\xi),\alpha(x,\xi)\bparen{\om'(\xi)\dy+\dr}}\ee
where \bel{def:yr etc.}\begin{array}{rlrl} y(x,\xi) &=
\paren{x-\frac{x_3}{\xi_3}\xi}'\qquad& r(x,\xi) &=
\frac{x_3}{\abs{\xi_3}}\norm{\xi}\\ \alpha(\xi)&=
\frac{\xi_3}{\abs{\xi_3}}\norm{\xi}&\om(\xi) &=
\frac{\xi_3}{\abs{\xi_3}\norm{\xi}}\xi\in S^2_+\end{array}\ee and 
$S^2_+$ is the open upper hemisphere of $S^2$.

Recall that $(x_1,x_2,x_3)' = (x_1,x_2)$.
  Then, by
equations (3.7)-(3.10) in \cite{QRS2011}, \bel{Cs and c}\Cs\circ
\sxxi = \sparen{c(x,\xi\dx)}.\ee

Note that $c$ is a diffeomorphism from $\sparen{(x,\xi)\st x\in X,
\xi_3\neq 0}$ onto
\bel{def:Ts}T_{\Sc}=\sparen{\paren{(y,r),\alpha[\eta\dy+\dr]}\st(y,r)\in
\Xis,\ \alpha\neq 0,\ \eta\in D}\subset T^*(\Xis)\ee where $D$ is the
open unit disk in $\rtwo$.  

Let $((y,r),\nu\dy+\alpha\dr))\in T_\Sc$, Then, $\nu/\alpha\in D$ and
so \bel{c inverse}\begin{gathered}\vn(\nu,\alpha) =
\paren{\nu/\alpha,\sqrt{1-\norm{\nu/\alpha}^2}}\in
S^2_+\ \text{ and }\ x(y,r,\nu,\alpha)=(y,0)+r\, \vn(\nu,\alpha)\\
\text{satisfy}\\ c^{-1}(((y,r),\nu\dy+\alpha\dr))=
(x(y,r,\nu,\alpha),\alpha \vn(\nu,\alpha)\dx).\end{gathered}\ee Because
of relation \eqref{c inverse} and the definition of $\Cst$, if
$((y,r),\nu\dy+\alpha\dr)\in T_{\Sc}$ then \[\Cst
\circ\sparen{((y,r),\nu\dy+\alpha\dr)} =
\sparen{c^{-1}\paren{(y,r),\nu\dy+\alpha\dr}}.\] Therefore,
\bel{Delta:sphere}\Cst \circ \Cs = \sparen{(x,\xi\dx;x,\xi\dx)\st x\in
X,\ \xi_3\neq 0}\ee is a dense open subset of the diagonal
$\Delta\subset T^*(X\times X)$.  This finishes the general microlocal
analysis of $\Ms$ and $\Msst$.

Let $f\in \Ec'(X)$ and let $K$ be a compact set in the plane with
nontrivial interior and let $A=K\times (0,\infty)$.

Because every covector in $T_\Sc$ has nonzero $\dr$ component and
every covector in $\WF(\chi_A)$ has zero $\dr$ component, the
non-cancellation condition \eqref{non-cancellation} holds. So, by
Theorem \ref{thm:WF mult} the product $\chi_A \Ms f=\Ma (f)$ is a
distribution.  By Lemma \ref{lemma:composition identity}, $\WF(\Ma
(f))\subset\Qc(A, \Cs\circ\WF(f))$.  As with the PAT case,
$\Qc(A,\Cs\circ\WF(f))$ breaks into the union of three sets
\begin{align}
\label{eq:wavefront set restricted Radon transform:sphere} \Qc(A,
\Cs\circ \WF(f))&= \bparen{\Cs\circ\WF( f)\cap \sparen{(y,r),\eta)\in T^*(\Xis)\st
y\in K}}\\
& \cup \WF(\chi_{A}) \cup \Wbd(f) \end{align}
where \bel{def:Wbd}\begin{aligned} \Wbd(f) = \Big\{ &\paren{(y,r),
\eta\dy +
\alpha\bparen{\n'(y,\tx)\dy + \dr}}
\st (y,\eta)\in N(K), \\ &\quad
 \alpha\neq 0,\  \tx\in
S(y,r),  \text{ and }
\paren{\tx,\alpha\n(y,\tx)\dx}\in\WF(f)\Big\}
\end{aligned}\ee  corresponds to the sum of  covectors
$((y,r),\eta\dy)\in \WF(\chi_A)$ and covectors 
\[((y,r),\alpha\bparen{\n'(y,\tx)\dy + \dr}) \in   \WF(\Cs\circ
\WF(f)).\]

We now examine the fibers of $\Wbd(f)$.  Fix $r>0$, and $\alphao\neq
0$ and $(y,\etao)\in N(K)$.  Assume there is a $\tx\in S(y,r)$ with
$(\tx,\alphao\n(y,\tx)\dx)\in \WF(f)$.  Note that \bel{WF add}\forall
a\neq 0,\ ((y,r),a\etao\dy)\in \WF(\chi_{A}), \quad\forall b>0,
\paren{(y,r),b\alphao\bparen{\n'(y,\tx)\dy+\dr}}\in \Cs\circ\WF(f).\ee
The first statement in \eqref{WF add} follows because $\chi_A$ is a
real valued function so $a$ can be negative as well as positive (for
real $f$ and any real cutoff function $\vp$, $\widehat{\vp f}(\xi)$ is
the complex conjugate of $\widehat{\vp f}(-\xi)$).  The right-hand
statement in \eqref{WF add} is true since $\WF(f)$ is conic.
Therefore the part of the fiber of $\Wbd(f)$ above $(y,r)$ that comes
from a singularity of $f$ at $(\tx,\alphao\n(y,\tx))$ consists of
points \bel{def:Wbdfiber}
\paren{(y,r),\bparen{a\etao+b\alphao\n'(y,\tx)}\dy+b\alphao\dr)}
\text{ for $a\in \rr,\ b>0$}.\ee At this point, we have finished steps
\eqref{paradigm:start} to \eqref{paradigm:Mf} of the paradigm of
Section \ref{sect:general strategy}.

The rest of the proof of \eqref{spherical mean singularities} is
similar to the proof of Theorem \ref{thm:visible and added sing}.
Composing $\Qc(A,\Cs\circ \WF(f))$ with $\Cst$, one sees that
$\Cst\circ \Qc(A,\Cs\circ\WF(f))$ is the union of three sets.  The
first set, from containment \eqref{eq:wavefront set restricted Radon
transform:sphere} becomes $\Cst\circ \bparen{\Cs\circ\WF(f)\cap
\sparen{(y,r),\eta)\in T^*(\Xis)\st y\in K}} = \WF(f)\cap \Vsk$.  The
second composition is $\Cst\circ\WF(\chi_A) = \emptyset$ since
$c^{-1}$ is not defined unless the $\dr$ component is not zero.  The
third composition is $\Cst\circ \Wbd(f)$ and this simplifies to
$\As{K}(f)$ if one uses \eqref{c inverse} and the expression for
part of the fiber of $\Wbd(f)$ given in \eqref{def:Wbdfiber}.

Finally, one applies Lemma \ref{lemma:composition identity},
\eqref{composition-genl} to finish the proof of \eqref{spherical mean
singularities}.

To prove the second assertion in Theorem \ref{thm:visible and added
sing sphere} if $P$ is elliptic, we use the result of Theorem
\ref{thm:reduction:sphere}, \eqref{elliptic spherical}, which is proven
independently of this part of the proof.  Let $(x,\xi\dx)\in
\WF(f)\cap \Vs{\intt(K)}$ and let $y=y(x,\xi)$ be the point in
$\intt(K)$ that is the center of the sphere $S(y(x,\xi), r(x,\xi))$
containing $x$ and which is normal to $\xi$ at $x$. Now, let $\vp$ be
a cutoff function that is supported in $\intt(K)$ and is equal to one
in a compact set $K'\subset \intt(K)$ chosen so $y\in \intt(K')$.
Then, by Theorem \ref{thm:reduction:sphere}, $(x,\xi\dx)\in
\WF(\Lvp(f))$.  However, 
\[(x,\xi\dx)\notin \WF(\Msst P (\chi_A-\vp)\Ms(f))\] for the following
reasons.  First, $(\chi_A-\vp)$ is zero near $(y,r)$ for all $r$. Thus
$P(\chi_A-\vp)\Ms(f)$ is smooth near $y$ and so $\Msst
P(\chi_A-\vp)\Ms(f)$ is smooth near $c\inv((y,r),\nu\dy+\alpha\dr)$
whenever $\nu/\alpha\in D$.  However, for some choice of $r, \nu$, and
$\alpha$, $((y,r),\nu\dy+\alpha\dr) = c(x,\xi\dx)$ and so $(x,\xi\dx)
= c^{-1}((y,r),\nu\dy+\alpha\dr)$.  Thus $\Msst P(\chi_A-\vp)\Ms (f)$
is smooth near $(x,\xi\dx)$.  These two results show that
$(x,\xi\dx)\in \WF(\Msst P\Msa(f))$ and this proves the second part of
the theorem.
\end{proof}

\bigskip
\begin{proof}[Proof of Theorem \ref{thm:reduction:sphere}]
Let $\vp$ be a cutoff function supported in $\intt(K)$ and equal to
one on the set $K'$ given in the statement of this theorem.  Then,
$\Kvp$ is trivially a pseudodifferential operator on $\Ec'(\Xis)$ and
so $P\Kvp\Ms$ is a FIO associated to $\Cs$.  Now, by
\eqref{Delta:sphere}, $\Lvp$ is a standard pseudodifferential
operator, and this proves the first part of Theorem
\ref{thm:reduction:sphere}.

To prove the ellipticity statement, \eqref{elliptic spherical}, let
$(x,\xi\dx)\in \WF(f)\cap \Vs{K'}$ and assume $P$ is elliptic. Let
$\nu\in T^*_{(y,r)}(\Xi)$ and $((y,r),\nu)=c(x,\xi\dx)$.  Because
$(x,\xi\dx)\in \Vs{K'}$, $y\in K'$.  Because $\Ms$ is elliptic (and
$\Pi_L:C\to T^*(\Xi)$ is an injective immersion) and $P\Kvp$ is
elliptic near $((y,r),\nu)$, one sees that $((y,r),\nu)\in
\WF(P\Kvp\Ms(f))$. Since $\Msst$ is elliptic and both $c$ and $c\inv$
are functions, $c\inv((y,r),\nu)=(x,\xi\dx)$ and $(x,\xi\dx)\in
\WF(\Lvp(f))$. This proves the second part of the theorem.
\end{proof}

\bigskip
\begin{remark}\label{remark:elliptic}
In the case of sonar, we can assert, if $P$ is elliptic, that $\Msst P
\Kvp\Ma$ is elliptic on $\Vs{\intt(K')}$ because composition with
$\Cs$ and $\Cst$ is described by two functions, $c$ and $c\inv$.  This
reflects the fact that for each $(x,\xi\dx)\in \Vs{\rtwo}$ there is a
unique $((y,r),\nu)$ in $\Cs\circ \sparen{(x,\xi\dx)}$.  In the case
of the circular transform in PAT, there are two such covectors,
corresponding to the two angles $\po(x,\xi)$ and $\pt(x,\xi)$ in the
proof of Theorem \ref{thm:visible and added sing}.  For the circular
transform, to assert that $\Mst P \Kvp\Mc$ is elliptic in
$\Vc_{(a',b')}$, one would have to calculate the symbol of $\Mst P\Kvp
\Mc$ and make sure it is nowhere zero on $\Vc_{(a',b')}$.  

This argument implies that $\Mst \Kvp\Mc$ is elliptic on
$\Vc_{(a',b')}$ since the symbol consists of terms that are positive
above $\Vc_{(a',b')}$ (see e.g., \cite{Gu1975, GS1977, Q1980}).  The
same argument shows that $\Mst P \Mc$ would be elliptic on
$\Vc_{(a',b')}$ when $P$ is a pseudodifferential operator with
top-order symbol that is everywhere positive (or everywhere negative)
above $(a',b')\times (0,\infty)$.

As a simple example where this is not true, if $R$ is the Radon line
transform and $R^*$ is its dual, then $R^* (d/dp) R = 0$ even though
each operator is elliptic ($d/dp$ is elliptic on range$(R)$).
\end{remark}

\begin{remark}\label{remark:subtle} Our analysis is simplified
because $C^t\circ C \subset \Delta$ in \eqref{Delta:circle} and
\eqref{Delta:sphere}.  Because of this, the set of ``visible''
singularities (\eqref{WF(f)} for PAT and $C^t_S $ composed with the
first set in \eqref{eq:wavefront set restricted Radon
transform:sphere} for sonar) is a subset of $\WF(f)$ since $C^t\circ
\paren{C\circ\WF(f)}\subset \Delta\circ \WF(f) = \WF(f)$.

If $C$ is the canonical relation of a FIO $\Fc$ and $\Pi_L:C\to
T^*(\Xi)$ is an injective immersion, then $C^t\circ C \subset \Delta$
and if they can be composed, $\Fc^* \Fc$ is a pseudodifferential
operator \cite{Gu1975, GS1977, Q1980}.

When $C^t\circ C$ is not a subset of $\Delta$, singularities can be
added by the backprojection itself, even with a smooth cutoff in place
of $\chi_A$.  This comes up, for example, in common midpoint synthetic
aperture radar: the normal operator $\Mc^* \Mc$ is not even a FIO but
a sum of singular FIOs associated with different canonical relations
\cite{AFKNQ:common-midpoint}.
\end{remark}

\end{appendix}

\bibliographystyle{siam}
\bibliography{references}

\end{document}